\renewcommand{\today}{%
  \number\day\space
  \ifcase\month\or
  January\or February\or March\or April\or May\or June\or
  July\or August\or September\or October\or November\or December\fi
  \space \number\year}
\newtheorem{theorem}{Theorem}[section]
\newtheorem{corollary}[theorem]{Corollary}
\newtheorem{lemma}[theorem]{Lemma}
\newtheorem{fact}[theorem]{Fact}
\theoremstyle{definition}
\newtheorem{defn}[theorem]{Definition}
\newtheorem{notation}[theorem]{Notation}
\newtheorem{example}[theorem]{Example}
\theoremstyle{remark}
\title[Compactness of first-order fuzzy logics]{Compactness of first-order fuzzy logics}
\author[S. M. A. Khatami]{Seyed Mohammad Amin Khatami}
\address{Seyed Mohammad Amin Khatami\\
  Department of Computer Science\\
  Birjand University of Technology\\
  Birjand\\
  Iran  }
\email{http://birjandut.ac.ir}
\thanks{}
\thanks{}
\begin{document}
\begin{abstract}
One of the nice properties of the first-order logic is the compactness of satisfiability. It state that a finitely satisfiable theory is satisfiable. However, different degrees of satisfiability in many-valued logics, poses various kind of the compactness in these logics. One of this issues is the compactness of $K$-satisfiability.

Here, after an overview on the results around the compactness of satisfiability and compactness of $K$-satisfiability in many-valued logic based on continuous t-norms (basic logic), we extend the results around this topic. To this end, we consider a reverse semantical meaning for basic logic. Then we introduce a topology on $[0,1]$ and $[0,1]^2$ that the interpretation of all logical connectives are continuous with respect to these topologies. Finally using this fact we extend the results around the compactness of satisfiability in basic ogic.
\end{abstract}
\maketitle                   





\section{Introduction}
The compactness theorem in classical first-order logic state that a finitely satisfiable theory is satisfiable. In the case of many-valued logics, switching from bivalent of the truth value set to many-valent, poses different kinds of many valued logics as well as various kinds of the compactness in these logics. The truth value set, basic set of logical connectives, interpretations of logical connectives, and different kinds of satisfiability, are the most significant factors that impact on the logic. The class of all many valued logics is very large to study. However, as the metamathematics of continuous t-norm based many valued logics have been studied in \cite{hajek98}, we shall study the compactness in these logics. Remind that a continuous t-norm $T$ is a continuous function $T:[0,1]^2\to [0,1]$ ($[0,1]$ and $[0,1]^2$ with the Euclidean topology) which is commutative, associative, non-decreasing on both arguments, and $T(1,x)=x$ for all $x\in[0,1]$. The main examples of continuous t-norms are: \lo, \g, and product t-norm. It is well-known that each continuous t-norm is a combination of these three fundamental continuous t-norms (see e.g. \cite{hajek98}).

For propositional fuzzy logics based on continuous t-norms, a systematic study have been done for the usual compactness as well as the K-compactness in \cite{cintnav2004}. However, in the case of predicate fuzzy logics, there is no such a comprehensive account. In many cases, in fact, even the usual compactness fails in these logics. Examples \ref{godelf} and \ref{prodf} shows that the usual compactness fails in the \g and product logic whose set of truth values is the continuous scale $[0,1]$. In spite of these examples, however, changing the truth value set or generalizing the concept of satisfiability to K-satisfiability, leads to some version of the compactness in these logics.

One of the fuzzy logics that satisfies the usual compactness as well as the K-compactness
for any closed subset K of the unite interval $[0,1]$,
in both propositional and first-order cases, is the \lo logic \cite{cintnav2004, but1995, tpd2011}. In fact, the main reson behind this, is the continuity of truth function of logical connectives of the \lo logic with respect to the Euclidean topology on $[0,1]$. In the case of propositional \lo logic an easy application of the Tychonoff theorem leads to the result \cite{cintnav2004, but1995}. In first-order case, there are several methods, of which the most significant one is the "Ultraproduct method" \cite{tpd2011,del2014}.

Here we extends the ideas in \cite{tpd2011} and \cite{kp2015} to solve the open problem stated in \cite{del2014} about a systematic study around the compactness and K-compactness of first-order fuzzy logics.
As mentioned, the main reason that ultraproduct method works well for the \lo
logic is the continuity of the truth function of connectives with respect to the Euclidean topology
on the standard truth value set $[0,1]$. On the other hand, one can easily verify that the truth function of
$\neg(p\leftrightarrow q)$ in \lo logic is the Euclidean metric $d(x,y)=|x-y|$, while in \g logic or product logic this gives only the discrete metric.

If one consider a reverse semantical meaning on the set of truth values $[0,1]$, i.e., if $0$ is stands for absolute
truth and $1$ for absolute falsity, then the truth function of the equivalence connective in \lo logic becomes
the Euclidean metric $d(x,y)=|x-y|$, and also in \g logic it's truth function is the  metric $d_G:[0,1]^2\to [0,1]$ defined by
\begin{center}
$d_G(x,y)=\left\{\begin{array}{ll}
\max\{x,y\}&x\ne y\\
0&x=y
\end{array}\right.$
\end{center}
and in product logic it's truth function is the metric $d_\pi:[0,1]^2\to [0,1]$ defined by
\begin{center}
$d_\pi(x,y)=\left\{\begin{array}{ll}
\displaystyle\frac{|x-y|}{1-\min\{x,y\}}&x\ne y\\
0&x=y
\end{array}\right.$.
\end{center}

Considering this fact, we prove some versions of the compactness for \g logic and
product logic by the ultraproduct method and then extend the result to fuzzy logics based on
continuous t-norms. As the first step after introduction, we have a review on some facts about the three fundamental continuous t-norm based fuzzy logics (\lo, \g, and product logic). Section 3 presents a reverse semantical meaning of fuzzy logics, and then we prove some variant of the compactness for these three basic fuzzy logics. Finally, we translate results to every-day semantic of fuzzy logics.
\section{Propositional Basic Logic}\label{sec1}
Continuous t-norm based fuzzy logics may be presented as having the truth value set $[0,1]$ with its natural ordering in which $1$ standing for absolute truth and $0$ for absolute falsity. Basic logical connectives are $\{\&,\to,\bot\}$.
\begin{defn}\label{bllogic}
Let $P=\{p_i\}_{i\in I}$ be a set of atomic propositions. Assume that $Prop$ be
generated from $P$ by the formal binary operations $\{\&, \to\}$ and the unary operation $\bot$. $Prop$ is called
a propositional basic logic and denoted by {\bf BL}.
\end{defn}
The strong conjunction $\&$ is interpreted by a continuous t-norm $T$, implication is interpreted by residuum of $T$ which is defined by $x\Rightarrow_Ty=\sup\{z: T(z,x)\le y\}$, and the zero function plays the role of $\bot$.

Among well known continuous t-norms based fuzzy logics, one can mention to the \lo, \g, and product logic whose corresponding t-norms and residua are listed in Table~\ref{tnorms}.
\begin{table}[h]
\centering
\caption{Some continuous t-norms and their residua}
\label{tnorms}
\begin{tabular}{l l l l}
\toprule
\rowcolor[gray]{.8}
logic name&t-norm&residuum\\
\midrule
\lo&$T_L(x,y)=\max\{0,x+y-1\}$&$x\Rightarrow_L y=
\left\{\begin{array}{cc}
1&x\le y\\
1-x+y&x>y
\end{array}\right.$\\
\g&$T_G(x,y)=\min\{x,y\}$&$x\Rightarrow_G y=
\left\{\begin{array}{cc}
1&x\le y\\
y&x>y
\end{array}\right.$\\
Product&$T_\pi(x,y)=x.y$&$x\Rightarrow_\pi y=
\left\{\begin{array}{cc}
1&x\le y\\
y/x&x>y
\end{array}\right.$\\
\bottomrule
\end{tabular}
\end{table}
\begin{defn}
Any function $v_0:P\to [0,1]$ could be extended to a unique function $v$, called an evaluation, from the set of all propositions to $[0,1]$ by the following rules:
\begin{center}
$v(\bot)=0$, $v(\varphi \&\psi)=T\left(v(\varphi),v(\psi)\right)$, and $v(\varphi\to \psi)=v(\varphi)\Rightarrow v(\psi)$.
\end{center}
\end{defn}
Other connectives that are commonly used in {\bf BL} are defined in Notation \ref{connectives}.
\begin{notation}\label{connectives} Further logical connectives that are defined by the set of basic logical connectives are:
\begin{itemize}
\item[] $\varphi\wedge\psi:=\varphi\&(\varphi\to\psi)$
\item[] $\varphi\vee\psi:=\big((\varphi\to\psi)\to\psi\big)\wedge\big((\psi\to\varphi)\to\varphi\big)$
\item[] $\neg\varphi:=\varphi\to\bot$
\item[] $\varphi\leftrightarrow\psi:=(\varphi\to\psi)\&(\psi\to\varphi)$
\item[] $\top:=\neg\bot$
\end{itemize}
\end{notation}
Using the continuity of t-norm, one can easily verify that $v(\varphi\vee\psi)=\max\{v(\varphi),v(\psi)\}$ and
$v(\varphi\wedge\psi)=\min\{v(\varphi),v(\psi)\}$.
\begin{defn}
Let $v$ be an evaluation and $\Sigma\cup\{\varphi\}\subseteq Prop$.
If $v(\varphi)=1$ we say that $v$ models $\varphi$, in symbols $v\models\varphi$. $v$ models $\Sigma$, $v\models\Sigma$, whenever $v\models\psi$ for all $\psi\in\Sigma$. When a proposition or theory has a model we call it satisfiable. We say that $\Sigma$ entails $\varphi$ whenever all models of $\Sigma$ models $\varphi$ which is denoted by $\Sigma\models\varphi$.
\end{defn}
As the set of truth values assumed to be $[0,1]$ instead of the finite two valued set $\{0,1\}$, the concept of satisfiability is, to some extent, a crisp notion. One of the generalization of this concept to a fuzzy concept, is   $K$-satisfiability.
\begin{defn}
For $K\subseteq[0,1]$ a proposition $\varphi$ is said to be $K$-satisfiable if there exists an evaluation $v$ such that $v(\varphi)\in K$. In this way $v$ is called a $K$-model of $\varphi$. A theory whose propositions satisfied by a $K$-model $v$, is called a $K$-satisfiable theory.
\end{defn}
While {\bf BL} is the logic of all continuous t-norms,  the known weakest many-valued logic based on t-norms is the logic of left-continuous t-norms, {\bf MTL}, whose basic logical connectives are $\{\&,\to,\wedge,\bot\}$ which are interpreted respectively by a left continuous t-norm, its residua, minimum and falsum.
\section{Axioms}
As H{\'a}jek mentioned, the axioms of {\bf BL} are as the following statements.\cite{hajek98},
\begin{itemize}
  \item[(A1~~)] $(\varphi \rightarrow \psi)\rightarrow \big((\psi\rightarrow \chi)\rightarrow(\varphi\rightarrow
\chi)\big)$
  \item[(A2~~)] $(\varphi\&\psi)\rightarrow \varphi$
  \item[(A3~~)] $(\varphi\&\psi)\rightarrow(\psi\&\varphi)$
  \item[(A4~~)] $\big(\varphi\&(\varphi\to\psi)\big)\rightarrow\big(\psi\&(\psi\to\varphi)\big)$
  \item[(A5a)] $\big(\varphi\rightarrow(\psi\rightarrow\chi)\big)\rightarrow\big((\varphi\&\psi)\rightarrow
\chi\big)$
  \item[(A5b)] $\big((\varphi\&\psi)\rightarrow
\chi\big)\rightarrow\big(\varphi\rightarrow(\psi\rightarrow\chi)\big)$
  \item[(A6~~)] $\big((\varphi\rightarrow\psi)\rightarrow\chi\big)\rightarrow
      \Big(\big((\psi\rightarrow\varphi)\rightarrow\chi\big)\rightarrow\chi\Big)$
  \item[(A7~~)] $\bot\rightarrow\varphi$
\end{itemize}
The only inference rule is being modus ponens. The concept of proof, which is denoted by $\vdash$, is defined in natural way.

{\bf BL} proves many interesting properties which could be find in literature. The following Lemma includes those that we need here.
\begin{lemma}
{\bf BL} proves the following properties.
\begin{enumerate}[label={\thelemma.\arabic*}]
  \item $\varphi\to(\psi\to\varphi)$ \label{l1}
  \item $(\varphi\&\psi)\to(\varphi\wedge\psi)$ \label{l2}
  \item $\big((\varphi_1\to\psi_1)\&(\varphi_2\to\psi_2)\big)\to
  \big((\varphi_1\&\varphi_2)\to(\psi_1\&\psi_2)\big)$ \label{l3}
\end{enumerate}
\end{lemma}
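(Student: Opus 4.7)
My plan is to derive each clause as a Hilbert-style theorem from the stated axioms, using modus ponens and, above all, the residuation axioms (A5a) and (A5b). For (l1), a one-step derivation suffices: instantiating (A5b) with $\chi:=\varphi$ gives $\big((\varphi\&\psi)\to\varphi\big)\to\big(\varphi\to(\psi\to\varphi)\big)$, whose hypothesis is exactly (A2), so modus ponens delivers the conclusion.

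For (l2), I first unfold $\varphi\wedge\psi\equiv\varphi\&(\varphi\to\psi)$, so that the target becomes $(\varphi\&\psi)\to\big(\varphi\&(\varphi\to\psi)\big)$. I would then establish the general $\&$-introduction scheme $\alpha\to\big(\beta\to(\alpha\&\beta)\big)$ by feeding the identity $(\alpha\&\beta)\to(\alpha\&\beta)$ into (A5b). Specialising $\alpha:=\varphi$ and $\beta:=\varphi\to\psi$ produces $\varphi\to\Big((\varphi\to\psi)\to\big(\varphi\&(\varphi\to\psi)\big)\Big)$. Combining this with (l1) in the form $\psi\to(\varphi\to\psi)$ through the hypothetical-syllogism rule derived from (A1) upgrades it to $\varphi\to\Big(\psi\to\big(\varphi\&(\varphi\to\psi)\big)\Big)$, and a single application of (A5a) then yields (l2).

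For (l3), the semantic content is that modus ponens can be performed componentwise inside a strong conjunction, so my plan is to internalise the obvious informal argument. Internal modus ponens $\big((\varphi_i\to\psi_i)\&\varphi_i\big)\to\psi_i$ is obtained by feeding the identity $(\varphi_i\to\psi_i)\to(\varphi_i\to\psi_i)$ into (A5a). A half-monotonicity rule ``if $\vdash A\to B$ then $\vdash (A\&C)\to(B\&C)$'' (itself derived by chaining the $\&$-introduction scheme from the (l2) argument with (A1) and (A5a)) then glues the two internal modus ponenses together to give
\[
\Big(\big((\varphi_1\to\psi_1)\&\varphi_1\big)\&\big((\varphi_2\to\psi_2)\&\varphi_2\big)\Big)\to(\psi_1\&\psi_2).
\]
Finally, commutativity (A3) together with the derivable associativity of $\&$ rearranges the antecedent as $\big((\varphi_1\to\psi_1)\&(\varphi_2\to\psi_2)\big)\&(\varphi_1\&\varphi_2)$, and a last application of (A5b) converts this into the required form of (l3).

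The main obstacle will be the bookkeeping lemmas feeding into the (l3) step, especially the half-monotonicity of $\&$ and its associativity. Both are standard consequences of (A1), (A3), (A5a), and (A5b), but because (l3) is itself the compact ``parametric'' statement of monotonicity I have to prove them separately without circularly invoking (l3); the plan above shows how to do so while still using only tools justified earlier in the argument.
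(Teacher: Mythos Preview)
Your argument is correct: the derivations you sketch for (l1)--(l3) are the standard ones and go through from the listed axioms, with the only real work being the auxiliary lemmas (identity $\alpha\to\alpha$, associativity of $\&$, and one-sided monotonicity of $\&$) that you correctly flag and that are all obtainable from (A1), (A3), (A5a), (A5b) without circularity. The paper itself does not give a proof at all---it simply refers the reader to H\'ajek's monograph---so your proposal is strictly more informative than what the paper provides; there is no alternative approach in the paper to compare against.
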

\begin{proof}
See \cite{hajek98}.
\end{proof}
\section{First-Order Basic Logic}
Given a first order language $\mathcal{L}$ consist of function symbols $\{f_i\}_{i\in I}$ and predicate symbols $\{P_j\}_{j\in J}$, the concept of $\mathcal{L}$-structure is defined as usual.
\begin{defn}
An $\mathcal{L}$-structure $\mathcal{M}$ is a nonempty set $M$ together with a set of functions
$\{f_i^{\mathcal{M}}:M^{n_i}\to M\}_{i\in I}\cup\{P_j^{\mathcal{M}}:M^{n_j}\to [0,1]\}_{j\in J}$ as the interpretations of language symbols, assuming that whenever $n_i=0$, $f_i^{\mathcal{M}}$ is an element of $M$ and whenever $n_j=0$, $P_j^{\mathcal{M}}$ is a truth value in $[0,1]$. Note that nullary function symbols
of the language $\mathcal{L}$ are commonly called constant symbols and denoted by $c_i$ instead of $f_i$.
\end{defn}
\begin{defn}
For an $n$-tuple $\bar{x}=(x_1, ..., x_n)$, the interpretation of term $t(\bar{x})$ is a functions $t^\mathcal{M}:M^n\to M$ defined inductively by 1) if $t(\bar{x})=x_i$ then $t^\mathcal{M}(\bar{a})=a_i$, 2)
if $t(\bar{x})=c$ then $t^\mathcal{M}(\bar{a})=c^{\mathcal{M}}$, and 3)
if $t(\bar{x})=f(t_1(\bar{x}),...,t_n(\bar{x}))$ then $t^\mathcal{M}(\bar{a}) = f^\mathcal{M}(t_1^\mathcal{M}(\bar{a}),...,t_n^\mathcal{M}(\bar{a}))$.\\
Also the interpretation of formula $\varphi(\bar{x})$ is a function $\varphi^\mathcal{M}:M^n \to [0,1]$,
defined inductively as follows:
\begin{itemize}
\item $\bot^\mathcal{M}=0$.
\item For every n-ary predicate symbol $P$, $P(t_1,...,t_n)^\mathcal{M}(\bar{a})=P^\mathcal{M}\left(t_1^\mathcal{M}(\bar{a}),...,t_n^\mathcal{M}(\bar{a})\right)$.
\item $(\varphi \&\psi)^\mathcal{M}(\bar{a}) =T\left(\varphi^\mathcal{M}(\bar{a}),\psi^\mathcal{M}(\bar{a})\right)$.
\item $(\varphi \to \psi)^\mathcal{M}(\bar{a}) =\varphi^\mathcal{M}(\bar{a})
\Rightarrow_T\psi^\mathcal{M}(\bar{a})$.
\item For $\varphi(\bar{x})=\forall y\ \psi(y,\bar{x})$ $\varphi^\mathcal{M}(\bar{a})=\displaystyle\inf_{b\in M}\{\psi^\mathcal{M}(b,\bar{a})\}$.
\item For $\varphi(\bar{x})=\exists y\ \psi(y,\bar{x})$ $\varphi^\mathcal{M}(\bar{a})=\displaystyle\sup_{b\in M}\{\psi^\mathcal{M}(b,\bar{a})\}$.
\end{itemize}
\end{defn}
\begin{defn}
For an $\mathcal{L}$-sentence $\varphi$, we say that $\mathcal{M}$ models $\varphi$, or $\mathcal{M}$ satisfies $\varphi$, or $\varphi$ is satisfiable, whenever $\varphi^\mathcal{M}=1$ and we show this by writing $\mathcal{M}\models\varphi$. An $\mathcal{L}$-theory $\Sigma$, i.e a set of $\mathcal{L}$-sentences, is satisfiable, whenever all of its sentences are satisfied by an $\mathcal{L}$-structure $\mathcal{M}$, denoted by $\mathcal{M}\models \Sigma$. We say that a theory $\Sigma$ entails a sentence $\varphi$, in symbols $\Sigma\models\varphi$, when each model of $\Sigma$ models $\varphi$.\\
For a set $K\subseteq[0,1]$, an $\mathcal{L}$-sentence $\varphi$ is called $K$-satisfiable if
there exists an $\mathcal{L}$-structure $\mathcal{M}$ such that $\varphi^\mathcal{M}\in K$, and $\mathcal{M}$ is
called a $K$-model of $\varphi$. The concept of $K$-satisfiable theory and $K$-entailment, defined in a similar way.
\end{defn}
\section{Compactness and $K$-Compactness in Basic Logic}
As usual a theory $\Sigma$ is finitely satisfiable means that every finite subset of $\Sigma$ is satisfiable. A logic is said to satisfies the compactness property if every finitely satisfiable theory is satisfiable. finitely $K$-satisfiable theory and $K$-compactness defined in a similar way.

Let's remind some known facts about compactness in basic logic.
\subsection{\lo logic}\hfill\\
Let {\L} and {\L}$\forall$ be an abbreviations for the propositional \lo logic and first-order \lo logic.
\begin{fact}\label{kcomplop}
Let $K$ be a compact subset of $[0,1]$ in Euclidean topology. Every finitely $K$-satisfiable theory over {\L} is $K$-satisfiable.
\end{fact}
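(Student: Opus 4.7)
The plan is to apply Tychonoff's theorem to the space of evaluations, following the classical approach sketched in the introduction. First I would let $P=\{p_i\}_{i\in I}$ denote the set of all atomic propositions and identify the set of evaluations with the product space $[0,1]^I$ equipped with the product topology. Tychonoff's theorem makes $[0,1]^I$ compact.

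Next, for each $\varphi\in\Sigma$ I would form
\[
V_\varphi=\{\,v\in[0,1]^I : v(\varphi)\in K\,\}
\]
and argue that it is closed. Because each proposition $\varphi$ contains only finitely many atomic propositions, say $p_{i_1},\dots,p_{i_n}$, the evaluation map $v\mapsto v(\varphi)$ factors through the continuous projection $[0,1]^I\to[0,1]^n$ followed by a function built inductively from the {\L}ukasiewicz t-norm $T_L(x,y)=\max\{0,x+y-1\}$, its residuum $x\Rightarrow_L y=\min\{1,1-x+y\}$, and the constant $0$ interpreting $\bot$. All of these are continuous in the Euclidean topology on $[0,1]$, so $v\mapsto v(\varphi)$ is continuous. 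Since $K$ is closed in $[0,1]$ (being compact in a Hausdorff space), the preimage $V_\varphi$ is closed in $[0,1]^I$.

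Finite $K$-satisfiability of $\Sigma$ is precisely the statement that $\bigcap_{\varphi\in\Sigma_0}V_\varphi\neq\emptyset$ for every finite $\Sigma_0\subseteq\Sigma$; equivalently, the family $\{V_\varphi:\varphi\in\Sigma\}$ of closed subsets of the compact space $[0,1]^I$ has the finite intersection property. Compactness then yields $\bigcap_{\varphi\in\Sigma}V_\varphi\neq\emptyset$, and any evaluation in this intersection is a $K$-model of $\Sigma$.

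I expect no serious obstacle; the whole argument rests on Tychonoff's theorem together with the continuity of the {\L}ukasiewicz connectives. The only minor point to verify is that $\Rightarrow_L$, stated piecewise in Table~\ref{tnorms}, is in fact continuous: the two branches $1$ and $1-x+y$ agree on the boundary $x=y$, so $\Rightarrow_L$ is continuous on all of $[0,1]^2$. This continuity, as emphasized in the introduction, is precisely the feature that distinguishes {\L}ukasiewicz logic from G{\"o}del and product logic and makes the direct Tychonoff argument work.
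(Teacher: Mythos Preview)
Your proposal is correct and follows essentially the same Tychonoff/finite-intersection-property argument that the paper invokes: the paper cites this fact to \cite{but1995,cintnav2004} as ``an easy application of the Tychonoff theorem'' and carries out the identical argument in the proof of Theorem~\ref{kcompfuzzy}, identifying evaluations with $[0,1]^I$, noting that each $\hat{\varphi}^{-1}(K)$ is compact by continuity of the connectives, and concluding via the finite intersection property. The only cosmetic difference is that you phrase it as ``closed sets in a compact space'' while the paper says ``compact sets''; these are equivalent here.
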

\begin{fact}\label{nonkcomplo}
Let $K$ be a noncompact subset of $[0,1]$ in Euclidean topology. There is a finitely $K$-satisfiable theory over {\L} such that it is not $K$-satisfiable.
\end{fact}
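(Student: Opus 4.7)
The strategy is to exploit the non-compactness of $K$ to construct a witnessing theory. Since $[0,1]$ is compact in the Euclidean topology, $K$ is non-compact exactly when $K$ is not closed, so I may fix a point $r \in \overline{K} \setminus K$ together with a sequence $(r_n)_{n \ge 1} \subseteq K \setminus \{r\}$ with $r_n \to r$. After passing to a subsequence I assume the $r_n$ approach $r$ monotonically, either strictly from below or strictly from above.

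The theory $\Sigma$ will be built on one or two propositional variables and will consist of formulas $\varphi_n$ whose truth functions are McNaughton functions (piecewise linear with integer coefficients) tuned so that (a) every finite subfamily of $\Sigma$ admits an evaluation sending the variables to appropriate values near some $r_m$, keeping all involved values inside $K$, and (b) no single evaluation can put every $v(\varphi_n)$ simultaneously into $K$, because any attempt to do so would force some $v(\varphi_n) = r$, contradicting $r \notin K$. Condition (a) witnesses finite $K$-satisfiability and condition (b) witnesses non-$K$-satisfiability.

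In the boundary case $r = 0$ with $r_n \downarrow 0$, a direct one-variable construction is $\Sigma = \{p\} \cup \{\neg \psi_n : n \ge 1\}$, where $\psi_n = p \oplus \cdots \oplus p$ ($n$-fold strong disjunction), so $v(\neg \psi_n) = \max(0, 1 - n v(p))$; each finite subfamily is $K$-satisfied by taking $v(p) = r_m$ for large enough $m$, while any putative $K$-model of all of $\Sigma$ must have $v(p) > 0$ (from the first conjunct, since $0 \notin K$) and yet $v(p) < 1/n$ for every $n$, an impossibility. The case $r = 1$ is dual, via the substitution $p \mapsto \neg p$.

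The hard step is the interior case $r \in (0,1)$, especially when $r$ is irrational, since no one-variable McNaughton function peaks at an irrational value. Here I would introduce a second variable $q$ and work with the formula $\neg(p \to q)$, whose truth function is the truncated difference $\max(0, v(p) - v(q))$; this provides a two-variable analogue of the boundary construction in which one forces $v(p) - v(q)$ to approach the forbidden limit $r$, so that a single $K$-model would yield a formula of value exactly $r \notin K$. Verifying condition (a) via concrete choices of $r_m$ and condition (b) via the convergence $r_n \to r \notin K$ in each case then completes the argument.
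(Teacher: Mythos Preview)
The paper does not supply its own proof of this fact; it merely cites \cite{cintnav2004}. Evaluating your proposal on its own: the overall strategy is sound, and the non-$K$-satisfiability argument in the boundary case $r=0$ is correct. However, the finite $K$-satisfiability half of that case has a genuine gap. You assert that setting $v(p)=r_m$ for large $m$ puts every formula of a finite subfamily of $\Sigma=\{p\}\cup\{\neg\psi_n:n\ge1\}$ into $K$; this would require $v(\neg\psi_n)=\max(0,1-nr_m)\in K$ for each $n$ involved, and nothing guarantees that $1-nr_m$ lies in $K$. Concretely, take $K=\{1/k:k\ge1\}$, so that $r=0\notin K$. For the three-element subfamily $\{p,\neg\psi_1,\neg\psi_2\}$ one needs $v(p)$, $1-v(p)$ and $\max(0,1-2v(p))$ all in $K$; the first two conditions force $v(p)=1/2$, and then $1-2v(p)=0\notin K$. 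Thus your $\Sigma$ is not even finitely $K$-satisfiable for this $K$, and the construction fails outright.

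The interior case $r\in(0,1)$ is only a sketch: you name the formula $\neg(p\to q)$ and say one should force $v(p)-v(q)$ toward the forbidden limit $r$, but you write down no explicit theory and verify neither finite $K$-satisfiability nor its failure for the whole theory. The same obstacle reappears there, since having truncated differences available does not by itself explain how the values of \emph{all} formulas in a finite block can be made to land simultaneously in an arbitrary, possibly very sparse, set $K$. A complete argument requires a more careful choice of formulas; see \cite{cintnav2004}.
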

\begin{fact}\label{kcomplo}
Let $K$ be a compact subset of $[0,1]$ in Euclidean topology. Every finitely $K$-satisfiable theory over {\L}$\forall$ is $K$-satisfiable.
\end{fact}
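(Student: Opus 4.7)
The plan is to carry out an ultraproduct construction, leveraging the continuity of the {\L}ukasiewicz t-norm $T_L$ and its residuum $\Rightarrow_L$ on $[0,1]^2$ in the Euclidean topology. Let $\Sigma$ be a finitely $K$-satisfiable theory over {\L}$\forall$. For each finite $\Delta\subseteq\Sigma$ fix a $K$-model $\mathcal{M}_\Delta$ of $\Delta$, and let $I$ denote the directed set of finite subsets of $\Sigma$. The sets $I_\varphi:=\{\Delta\in I:\varphi\in\Delta\}$ have the finite intersection property, so the filter they generate extends to an ultrafilter $\mathcal{U}$ on $I$. Because $[0,1]$ is compact, every family $(a_\Delta)_{\Delta\in I}\subseteq[0,1]$ has a well-defined ultralimit $\lim_{\mathcal{U}}a_\Delta\in[0,1]$.

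I would then form the ultraproduct $\mathcal{M}=\prod_{\mathcal{U}}\mathcal{M}_\Delta$ in the obvious way: its underlying set consists of equivalence classes of $\prod_\Delta M_\Delta$ modulo $\mathcal{U}$-equality, function symbols are interpreted coordinate-wise, and each predicate symbol $P$ is interpreted by $P^{\mathcal{M}}([\bar a])=\lim_{\mathcal{U}}P^{\mathcal{M}_\Delta}(\bar a_\Delta)$. The central step is a {\L}o\'s-style theorem: for every formula $\varphi(\bar x)$ and every tuple $[\bar a]$ in $\mathcal{M}$, $\varphi^{\mathcal{M}}([\bar a])=\lim_{\mathcal{U}}\varphi^{\mathcal{M}_\Delta}(\bar a_\Delta)$. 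The induction steps for $\bot$, $\&$, and $\to$ follow at once from the fact that ultralimits commute with continuous functions on $[0,1]^2$, which covers both $T_L$ and $\Rightarrow_L$.

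The hard part will be the quantifier steps, because $\inf$ and $\sup$ do not in general commute with ultralimits. The most uniform remedy is to take $\mathcal{U}$ countably incomplete and $\aleph_1$-good, making the ultraproduct $\aleph_1$-saturated; saturation converts the defining $\sup$ and $\inf$ in the clauses for $\exists$ and $\forall$ into genuine maxima and minima attained by elements of $\mathcal{M}$, which then pass through the inductive hypothesis. A more hands-on alternative is to diagonalize: for each $\Delta$ choose $b_\Delta\in M_\Delta$ whose value under $\psi^{\mathcal{M}_\Delta}(\,\cdot\,,\bar a_\Delta)$ approximates $\sup_{b\in M_\Delta}\psi^{\mathcal{M}_\Delta}(b,\bar a_\Delta)$ to within $1/|\Delta|$, and then verify directly, using continuity again, that $[b_\Delta]$ witnesses the required equality in $\mathcal{M}$ for the existential case; the universal case is dual.

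Once the {\L}o\'s theorem is in place the conclusion is immediate. For each $\varphi\in\Sigma$ we have $I_\varphi\in\mathcal{U}$, and for every $\Delta\in I_\varphi$, $\varphi^{\mathcal{M}_\Delta}\in K$. Compactness of $K$ then forces $\varphi^{\mathcal{M}}=\lim_{\mathcal{U}}\varphi^{\mathcal{M}_\Delta}\in K$, so $\mathcal{M}$ is the desired $K$-model of $\Sigma$.
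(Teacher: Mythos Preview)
The paper does not give its own proof of this fact; it records it as a known result, attributes it to the ultraproduct method of \cite{tpd2011}, and identifies the continuity of the \L ukasiewicz connectives in the Euclidean topology as the decisive ingredient. Your proposal is exactly that method, and it is correct.

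One simplification is worth noting. You flag the quantifier steps as the hard part and reach for either $\aleph_1$-goodness/saturation or a $1/|\Delta|$ diagonalisation, but neither is needed. For $\exists$ (the $\sup$ clause), one inequality is immediate from monotonicity of ultralimits; for the other, fix $\varepsilon>0$, choose for each $\Delta$ some $b_\Delta\in M_\Delta$ with $\psi^{\mathcal{M}_\Delta}(b_\Delta,\bar a_\Delta)>\sup_{b}\psi^{\mathcal{M}_\Delta}(b,\bar a_\Delta)-\varepsilon$, and take the ultralimit to obtain $\sup_{[b]}\psi^{\mathcal{M}}([b],[\bar a])\ge\lim_{\mathcal{U}}\sup_{b}\psi^{\mathcal{M}_\Delta}(b,\bar a_\Delta)-\varepsilon$; then let $\varepsilon\to 0$. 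The universal case is dual. This is the same direct pattern the paper itself uses later in its \L o\'s theorem for G\"odel logic in the metric semantic, there phrased via an order-theoretic contradiction argument rather than an $\varepsilon$-approximation, but with no appeal to saturation.
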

The main reason behind Fact \ref{kcomplop} - Fact \ref{kcomplo} is the continuity of the interpretation of logical connectives in {\L} and {\L}$\forall$. For $K=\{1\}$, Fact \ref{kcomplop} is the standard compactness and it is an easy consequence of the completeness theorem which has been proved independently in \cite{rose58} and \cite{chang1959new}. For arbitrary compact subset $K$ of $[0,1]$, the sufficiency condition for the $K$-compactness of {\L}, Fact \ref{kcomplop}, has been established in \cite{but1995,cintnav2004} and the necessity condition, Fact \ref{nonkcomplo}, has been appeared in \cite{cintnav2004}. Fact \ref{kcomplo} for $K=\{1\}$, is the standard compactness theorem for {\L}$\forall$ that was initially proved in \cite{chang63}. Fact \ref{kcomplo} actually is the  sufficiency condition for the $K$-compactness of {\L}$\forall$ for arbitrary compact subset $K$ of $[0,1]$, and it is proved in \cite{tpd2011}.
\subsection{\g logic and product logic}\hfill\\
The non-continuity of the interpretation of the implication connective in \g logic ({\bf G}) as well as product logic ($\mathbf{\Pi}$), break down getting a general result about the compactness in these logics. However, some partial results are obtained in literature.
\begin{fact}\label{kcompgodelprop1}
Let $K$ be an arbitrary subset of $[0,1]$ and the set of atomic propositions be finite. Then every finitely $K$-satisfiable theory over the propositional \g logic,{\bf G}, is $K$-satisfiable.
\end{fact}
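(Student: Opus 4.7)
The plan is to reduce the problem to the finite case by showing that over a finite set $P=\{p_1,\dots,p_n\}$ of atomic propositions, $\mathbf{G}$ admits only finitely many propositions up to logical equivalence; compactness (for every $K$) is then automatic.

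First, for an evaluation $v:P\to[0,1]$, I would define its \emph{type} $\tau(v)$ to be the triple $(Z(v),O(v),\preceq_v)$, where $Z(v)=\{i:v(p_i)=0\}$, $O(v)=\{i:v(p_i)=1\}$, and $\preceq_v$ is the weak linear order on the remaining indices induced by $i\mapsto v(p_i)$. Because $n$ is finite, there are only finitely many types; writing $S_\tau=\{v:\tau(v)=\tau\}$, we obtain a finite disjoint decomposition $[0,1]^P=\bigsqcup_\tau S_\tau$.

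The key lemma is: for every proposition $\varphi$ over $P$ and every type $\tau$, the restriction of $v\mapsto v(\varphi)$ to $S_\tau$ is either the constant $0$, the constant $1$, or a projection $v\mapsto v(p_i)$ for some $i=i(\varphi,\tau)$. I would prove this by induction on the complexity of $\varphi$. The atomic and $\bot$ cases are immediate. For $\psi\&\chi$, the induction hypothesis expresses $\psi^v$ and $\chi^v$ on $S_\tau$ as members of $\{0,1,v(p_{i_\psi}),v(p_{i_\chi})\}$; since the relative order of these quantities is fixed by $\tau$, so is their minimum, which is of the required form. For $\psi\to\chi$, the G\"{o}del residuum $x\Rightarrow_G y$ equals $1$ if $x\le y$ and $y$ otherwise, and both the comparison and the resulting value are determined by $\tau$ together with the induction hypothesis.

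From the lemma, each $\varphi$ corresponds to a function from the finite set of types into the finite set $\{0,1\}\cup\{p_1,\dots,p_n\}$, so there are only finitely many pairwise logically inequivalent propositions over $P$. Given a finitely $K$-satisfiable theory $\Sigma$, pick a finite $\Sigma_0\subseteq\Sigma$ that meets every equivalence class realized in $\Sigma$. By hypothesis $\Sigma_0$ admits a $K$-model $v$, and since each $\varphi\in\Sigma$ agrees pointwise with its representative in $\Sigma_0$, one has $v(\varphi)\in K$ for every $\varphi\in\Sigma$. The only non-mechanical step is the inductive lemma, and the one subtlety in it is that the type must record which atoms take the absorbing values $0$ and $1$, since these values interact specially with $\min$ and with the residuum; with that refinement in place, the induction is a routine case analysis.
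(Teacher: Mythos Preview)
Your argument is correct and follows exactly the line the paper indicates: the paper merely remarks that with finitely many atomic propositions one can form only finitely many formulas with different semantics, and you supply the missing details by stratifying evaluations into finitely many order-types and proving inductively that on each stratum every formula evaluates as $0$, $1$, or a fixed projection $v\mapsto v(p_i)$. The inductive case analysis is sound (the weak order in the type indeed decides all the comparisons arising from $\min$ and $\Rightarrow_G$, and recording the absorbing values $0$ and $1$ separately is exactly what is needed), and the final step---choosing a finite $\Sigma_0$ meeting every equivalence class present in $\Sigma$ and observing that a $K$-model of $\Sigma_0$ is automatically a $K$-model of $\Sigma$---is immediate.
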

\begin{fact}\label{comgodelp}
Assume that the set of atomic propositions is at most countable. Then every finitely satisfiable theory over {\bf G} is satisfiable.
\end{fact}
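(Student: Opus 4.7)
The plan is to exploit the combinatorial nature of G\"{o}del semantics: under a valuation $v$, the truth value of any formula is essentially determined by the total preorder that $v$ induces on the (finitely many) atoms appearing in the formula together with the constants $0$ and $1$. First I would verify this by induction on formulas. Since $T_G$ is the minimum, $\Rightarrow_G$ outputs either $1$ or its second argument, and the interpretation of $\bot$ is the constant $0$, a straightforward induction shows that $v(\varphi) \in \{0,1\} \cup \{v(p) : p \text{ occurs in } \varphi\}$, and moreover the equivalence class of $v(\varphi)$ in the preorder on $\{v(p) : p \in \mathrm{atoms}(\varphi)\} \cup \{0,1\}$ is determined by that preorder alone. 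In particular, $v \models \varphi$ is a purely combinatorial property of the preorder on $\mathrm{atoms}(\varphi) \cup \{\bot,\top\}$.

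Next I would set up a compact configuration space. Let $P = \{p_n\}_{n \in \mathbb{N}}$ and define $\mathcal{C}$ to be the set of all total preorders on $P \cup \{\bot, \top\}$ in which $\bot$ is a minimum and $\top$ a maximum. Representing such a preorder as a function $(P \cup \{\bot, \top\})^2 \to \{<, =, >\}$, the space $\mathcal{C}$ embeds as a closed subspace of a countable product of finite discrete spaces, cut out by finite-coordinate axioms of totality, transitivity, and the minimum/maximum constraints; hence $\mathcal{C}$ is compact.

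For each finite $F \subseteq \Sigma$ with atom-set $V_F$, let $S_F \subseteq \mathcal{C}$ consist of those preorders whose restriction to $V_F \cup \{\bot, \top\}$ combinatorially satisfies every formula of $F$ in the sense of step one. Since $V_F$ is finite, $S_F$ depends on only finitely many coordinates and is clopen in $\mathcal{C}$. The family $\{S_F\}$ has the finite intersection property: given $F_1, \ldots, F_k$, finite satisfiability of $\Sigma$ yields a valuation $v$ modeling $F_1 \cup \cdots \cup F_k$, and the preorder induced by $v$ lies in $\bigcap_i S_{F_i}$. By compactness of $\mathcal{C}$, the intersection $\bigcap_{F} S_F$ over all finite $F \subseteq \Sigma$ is nonempty.

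Finally I would convert any $\sigma$ in this intersection into an honest valuation. Since $P$ is countable, the quotient linear order of $\sigma$ is countable and carries a distinguished minimum and maximum, so it embeds order-preservingly into $[0,1]$ with these endpoints sent to $0$ and $1$; composing with the quotient gives $v : P \to [0,1]$ that induces $\sigma$, and hence a model of $\Sigma$ by step one. The main obstacle is step one: carefully isolating the combinatorial invariant and checking that the inductive clauses behave correctly for each G\"{o}del connective, in particular for nested implications whose value depends delicately on comparisons between subformulas. Once that reduction is in hand, the rest of the argument proceeds on purely combinatorial tracks, and the countability of $P$ enters essentially in the realization step where the quotient order is embedded into $[0,1]$ --- a step that would fail for uncountable $P$, reflecting why the hypothesis cannot be dropped.
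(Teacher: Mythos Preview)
Your argument is correct and essentially self-contained. The inductive reduction in step one is the standard observation that in G\"odel semantics every compound value is either $0$, $1$, or the value of one of the atoms, and which of these it is depends only on the order type of the atoms together with $0$ and $1$; the remaining steps then go through as you outline. One small point worth making explicit in step four is that the order-embedding of the quotient into $[0,1]$ is injective on equivalence classes, so the valuation $v$ you construct induces \emph{exactly} the preorder $\sigma$, not merely a coarsening of it; this is what guarantees that $v$ lands in every $S_F$.

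The paper does not give its own proof of this fact but refers to the literature: it attributes the result to Dummett's completeness theorem for propositional G\"odel logic, with the key step being that the Lindenbaum--Tarski G\"odel algebra of $T$-equivalence classes of formulas (for a countable language) is countable and hence embeds into the standard G\"odel algebra $[0,1]$. Your route is genuinely different: you bypass the proof calculus and completeness entirely, working purely semantically via a topological compactness argument on the space of total preorders. What the two approaches share is the decisive use of countability at the realization step, embedding a countable ordered object into $[0,1]$; in the paper's cited approach this object is the Lindenbaum algebra, in yours it is the quotient of the limiting preorder. Your argument is more elementary in that it avoids any syntactic machinery, while the algebraic route has the advantage of yielding completeness (and hence compactness) as a package.
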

\begin{fact}\label{comgodel}
Assume that $\mathcal{L}$ be an at most countable first-order language.
In the first-order \g logic {\bf G}$\forall$, every finitely satisfiable $\mathcal{L}$-theory is satisfiable.
\end{fact}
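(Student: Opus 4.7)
The plan is to reduce the claim to the classical compactness theorem by translating Gödel semantics into a countable two-sorted classical first-order theory, then extracting a Gödel model from a classical one.

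First I would set up a two-sorted classical language $\mathcal{L}^\sharp$ with an ``object'' sort (inheriting all function symbols of $\mathcal{L}$) and a ``truth-value'' sort $\mathbf{V}$ carrying a binary order $\le$ and constants $0,1$. For each $n$-ary Gödel predicate $P$ of $\mathcal{L}$ I introduce an $(n{+}1)$-ary classical relation $A_P$ whose last argument lies in $\mathbf{V}$, intended to encode ``$P(\bar{x})^{\mathcal{M}}\ge q$''. I extend $A$ to every Gödel subformula $\varphi(\bar{x})$ appearing in $T$ by the recursive definitions $A_{\psi\&\chi}\leftrightarrow A_\psi\wedge A_\chi$, $A_{\forall y\psi}(\bar{x},q)\leftrightarrow \forall y\, A_\psi(\bar{x},y,q)$, $A_{\exists y\psi}(\bar{x},q)\leftrightarrow \exists y\, A_\psi(\bar{x},y,q)$, and (encoding the Dedekind-cut reading of Gödel implication)
\begin{equation*}
A_{\psi\to\chi}(\bar{x},q)\leftrightarrow A_\chi(\bar{x},q)\vee \forall r\in\mathbf{V}\,\bigl(A_\psi(\bar{x},r)\to A_\chi(\bar{x},r)\bigr).
\end{equation*}
Since $\mathcal{L}$ is countable, so are $\mathcal{L}^\sharp$ and the set of subformulas occurring in $T$. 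Let $T^\sharp$ now collect: axioms making $(\mathbf{V},\le,0,1)$ a dense linear order with endpoints; the monotonicity schema $q\le r\wedge A_P(\bar{x},r)\to A_P(\bar{x},q)$; the defining equivalences above for each $A_\varphi$; and, for each $\varphi\in T$, the sentence $\forall q\in\mathbf{V}\,(q<1\to A_\varphi(q))$ expressing $\varphi^{\mathcal{M}}=1$.

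Next I would verify that $T^\sharp$ is finitely satisfiable: any finite subtheory $T_0^\sharp$ mentions only finitely many $\varphi_1,\dots,\varphi_k\in T$, and a Gödel model $\mathcal{M}_0$ of $\{\varphi_1,\dots,\varphi_k\}$ over $[0,1]$ (provided by finite satisfiability of $T$) yields a classical model of $T_0^\sharp$ by taking $\mathbf{V}^{\mathcal{M}_0^\sharp}:=\mathbb{Q}\cap[0,1]$ and $A_\varphi(\bar{a},q)\iff\varphi^{\mathcal{M}_0}(\bar{a})\ge q$. Classical compactness and downward Löwenheim--Skolem then produce a countable $\mathcal{N}^\sharp\models T^\sharp$; its truth-value sort $\mathbf{V}^{\mathcal{N}^\sharp}$ is a countable dense linear order with endpoints, so by Cantor's isomorphism theorem it embeds order-preservingly into $[0,1]$ sending $0\mapsto0$ and $1\mapsto1$. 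Fix such an embedding, and define the Gödel structure $\mathcal{N}$: the universe is the object sort of $\mathcal{N}^\sharp$, functions are inherited, and for each $P$ set
\begin{equation*}
P^{\mathcal{N}}(\bar{a}):=\sup\bigl\{q\in\mathbf{V}^{\mathcal{N}^\sharp}:\mathcal{N}^\sharp\models A_P(\bar{a},q)\bigr\},
\end{equation*}
the supremum being taken in $[0,1]$ via the embedding.

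The core step is then a routine induction on the complexity of $\varphi$, using the defining equivalences in $T^\sharp$, showing that $\mathcal{N}^\sharp\models A_\varphi(\bar{a},q)$ implies $\varphi^{\mathcal{N}}(\bar{a})\ge q$ for every $q\in\mathbf{V}^{\mathcal{N}^\sharp}$. Applied with $q<1$ to each $\varphi\in T$, the axiom $A_\varphi(q)$ of $T^\sharp$ gives $\varphi^{\mathcal{N}}\ge q$ for every $q<1$ in the dense set $\mathbf{V}^{\mathcal{N}^\sharp}$, hence $\varphi^{\mathcal{N}}=1$, so $\mathcal{N}\models T$.

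The most delicate point is the implication case of the induction: Gödel implication is discontinuous at the diagonal, so deriving $(\psi\to\chi)^{\mathcal{N}}(\bar{a})\ge q$ from $A_{\psi\to\chi}(\bar{a},q)$ requires showing that the universal clause $\forall r\,(A_\psi(\bar{a},r)\to A_\chi(\bar{a},r))$ forces $\psi^{\mathcal{N}}(\bar{a})\le\chi^{\mathcal{N}}(\bar{a})$ in $[0,1]$, which rests on the density of the image of $\mathbf{V}^{\mathcal{N}^\sharp}$ in $[0,1]$; one may need to refine the bookkeeping by tracking strict and non-strict cuts separately to cover the boundary where $\varphi^{\mathcal{N}}(\bar{a})$ coincides with a point of $\mathbf{V}^{\mathcal{N}^\sharp}$. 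The countability hypothesis on $\mathcal{L}$ is essential in three places: it keeps $\mathcal{L}^\sharp$ and the schema of defining axioms countable, it permits Löwenheim--Skolem to deliver a countable $\mathcal{N}^\sharp$, and it makes Cantor's theorem available for embedding the truth-value sort into $[0,1]$.
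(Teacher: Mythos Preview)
The paper does not give its own proof of this fact; it attributes it to Horn's completeness theorem for $\mathbf{G}\forall$, whose key step is that the countable Lindenbaum--Tarski G\"odel algebra of a consistent theory embeds into the standard algebra $[0,1]$. Your route---a semantic reduction to classical compactness followed by extraction via Cantor's theorem---is a genuinely different and in principle legitimate alternative, and it uses the countability hypothesis in essentially the same place (to get an embedding of a countable order into $[0,1]$).

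However, there is a concrete gap in the finite-satisfiability step. With your intended reading $A_\varphi(\bar a,q)\Longleftrightarrow\varphi^{\mathcal M_0}(\bar a)\ge q$, the defining equivalence for the existential quantifier is \emph{false} in the standard model: $\sup_b\psi^{\mathcal M_0}(\bar a,b)\ge q$ does not imply the existence of some $b$ with $\psi^{\mathcal M_0}(\bar a,b)\ge q$ when the supremum is not attained, so $\mathcal M_0^\sharp$ need not satisfy $A_{\exists y\psi}(\bar a,q)\to\exists y\,A_\psi(\bar a,y,q)$. Since that equivalence is an axiom of $T^\sharp$, your proposed classical model of $T_0^\sharp$ fails. (Dually, with the strict reading $A_\varphi\Leftrightarrow\varphi>q$ the universal-quantifier equivalence fails.) This is not a cosmetic issue: the very direction you need in the final induction for $\exists$ is the one that breaks.

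A second, related gap is the implication case of the final induction. To conclude $\psi^{\mathcal N}(\bar a)\le\chi^{\mathcal N}(\bar a)$ from the universal clause you need not only the forward implication $A_\psi(\bar a,r)\Rightarrow\psi^{\mathcal N}(\bar a)\ge r$ but also a converse such as $\psi^{\mathcal N}(\bar a)>r\Rightarrow A_\psi(\bar a,r)$; otherwise there is no way to produce a witness $r$ refuting the clause. Your one-directional induction does not supply this, and density of $\mathbf V^{\mathcal N^\sharp}$ alone is not enough.

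Both problems are repairable with the refinement you already gesture at: carry \emph{two} families $A_\varphi^{\ge}$ and $A_\varphi^{>}$, use the implication $A^{>}_{\exists y\psi}\leftrightarrow\exists y\,A^{>}_\psi$ and $A^{\ge}_{\forall y\psi}\leftrightarrow\forall y\,A^{\ge}_\psi$, link them by $A^{>}_\varphi(\bar x,q)\to A^{\ge}_\varphi(\bar x,q)$ and $A^{\ge}_\varphi(\bar x,q)\wedge q'<q\to A^{>}_\varphi(\bar x,q')$, and run the induction as a pair of statements. With this bookkeeping the standard-model verification goes through and the implication step becomes honest. As written, though, the proposal does not close.
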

\begin{fact}\label{finitek}
Let $K$ be a finite subset of $[0,1]$. {\bf G} with at most countable set of atomic propositions and {\bf G$\forall$} with at most countable underlying language are $K$-compact.
\end{fact}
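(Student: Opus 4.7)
The plan is to handle the propositional and first-order cases separately, exploiting the characteristic feature of G\"odel logic: for any formula $\varphi$, the truth value $v(\varphi)$ lies in the finite set $\{v(p):p\in\mathrm{At}(\varphi)\}\cup\{0,1\}$ and depends only on the linear pre-order among those atomic values together with which of them equal the distinguished constants $0,1$ or elements of $K$. Combined with finiteness of $K$, this reduces the compactness question to a pigeonhole argument over finitely many ``patterns''.

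For \textbf{G} with countable atoms, enumerate $\Sigma=\{\psi_n\}_n$ and fix, for each $n$, a $K$-model $v_n$ of $\Sigma_n=\{\psi_1,\ldots,\psi_n\}$. Let $T_n$ be the finite set of subformulas of $\Sigma_n$, and associate to $v_n$ its \emph{pattern} on $T_n$: the pre-order induced by $v_n$ together with a label in $K\cup\{0,1\}$ for those $\varphi\in T_n$ with $v_n(\varphi)\in K\cup\{0,1\}$ (and a symbol $*$ otherwise). Since only finitely many patterns on each $T_m$ exist, a diagonal/pigeonhole argument extracts a subsequence $(v_{n_k})$ whose patterns on every $T_m$ are eventually constant, yielding a coherent limit pattern $\pi_\infty$ on $T_\infty=\bigcup_m T_m$. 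Because $[0,1]$ realizes every countable pre-order with prescribed labels from the finite set $K\cup\{0,1\}$, one defines $v_\infty:P\to[0,1]$ realizing $\pi_\infty$ on the atoms in $T_\infty$ (and extending arbitrarily elsewhere). A structural induction---whose implication step depends only on the stabilized strict order between $\alpha$ and $\beta$ in each subformula $\alpha\to\beta$---shows $v_\infty(\varphi)$ agrees with $\pi_\infty(\varphi)$ for all $\varphi\in T_\infty$; in particular $v_\infty(\psi_n)\in K$ for every $n$, so $v_\infty$ is a $K$-model of $\Sigma$.

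For \textbf{G}$\forall$ over a countable language $\mathcal{L}$, I would combine the above with a Henkin-style construction: adjoin countably many fresh constants to $\mathcal{L}$ and inductively extend $\Sigma$ to a theory $\Sigma^*$ in which every existential $\exists x\,\varphi(x)$ and every universal $\forall x\,\varphi(x)$ is witnessed by a new constant, while preserving finite $K$-satisfiability at each stage. Treating the closed atomic $\mathcal{L}^*$-formulas as propositional atoms, one applies the pattern-stabilization argument to the propositional theory they generate; the resulting evaluation then yields a term model of $\Sigma^*$, hence of $\Sigma$. The main obstacle I foresee is the Henkin step: in contrast to classical logic, $\exists$ and $\forall$ correspond to a supremum and an infimum that need not be attained, so witnesses must be chosen to force the intended value. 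Finiteness of $K$ is crucial here, since the suprema and infima taken by sentences of $\Sigma^*$ must land in the finite (hence closed) set $K$ and can therefore be realized by terms picked with care along the Henkin enumeration.
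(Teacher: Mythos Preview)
Your propositional argument is essentially correct and constitutes a genuine alternative to the paper's route. The paper does not prove Fact~\ref{finitek} directly: it simply records that it is an easy consequence of ordinary ($\{1\}$-)compactness, namely Facts~\ref{comgodelp} and~\ref{comgodel}, with a reference to \cite{cintnav2004}. The intended reduction is to introduce fresh nullary predicates $Q_1,\ldots,Q_m$ (one for each element of $K$), replace each $\varphi\in\Sigma$ by the sentence $\bigvee_j(\varphi\leftrightarrow Q_j)$, add order constraints among the $Q_j$, apply ordinary compactness to obtain a model, and then compose all predicate interpretations with an order-automorphism of $[0,1]$ fixing $0$ and $1$ that sends the $Q_j$-values into $K$. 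Both approaches ultimately rest on the same order-invariance of G\"odel semantics; yours builds the $K$-model from scratch via pattern stabilization, whereas the paper offloads all the work to the already-established compactness theorems.

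The first-order part of your proposal, however, has a genuine gap. Your Henkin step proposes witnessing each $\exists x\,\varphi(x)$ and $\forall x\,\varphi(x)$ by a fresh constant, and you justify this by saying that finiteness of $K$ forces the relevant suprema and infima to land in a closed set and hence to be realized. This reasoning fails on two counts. First, only the \emph{top-level} sentences of $\Sigma$ are constrained to take values in $K$; their quantified subformulas---precisely the formulas that require Henkin witnesses---are under no such constraint. Second, even when an infimum or supremum happens to lie in the finite set $K$, nothing guarantees it is attained by some instance $\varphi(t)$; closedness of $K$ is irrelevant here. In G\"odel logic the classical Henkin axiom $\exists x\,\varphi(x)\to\varphi(c)$ has value $1$ only if $c$ actually attains the supremum, so adjoining it can destroy (finite) satisfiability. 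This is exactly why the standard completeness/compactness proofs for $\mathbf{G}\forall$ (Horn, and H\'ajek's presentation) proceed via the countable Lindenbaum G\"odel algebra and its order-embedding into $[0,1]$, rather than by a na\"{\i}ve term-model construction. The paper's route sidesteps this obstacle entirely by invoking Fact~\ref{comgodel} as a black box; if you want a self-contained argument along your lines, you must replace the Henkin step by that algebraic embedding, not by the attainment heuristic you sketch.
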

\begin{fact}\label{pour}
Let $\mathcal{L}$ be an at most countable first-order language and $K$ be a closed subset of $[0,1]$. {\bf G$\forall$} is not $K$-compact if and only if $K$ is infinitely and $1\notin K$.
\end{fact}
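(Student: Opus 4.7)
The statement is an ``iff'': $\mathbf{G}\forall$ fails $K$-compactness exactly when $K$ is infinite and $1 \notin K$. I will handle each direction in turn, and I expect the construction direction to be the main bottleneck.

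\emph{Direction $(\Leftarrow)$: if $K$ is closed, infinite, and $1\notin K$, exhibit a finitely $K$-satisfiable theory that is not $K$-satisfiable.} Because $K$ is closed, infinite in $[0,1]$, and excludes $1$, it contains an accumulation point $a\in K$ together with a strictly monotone sequence $(k_n)\subseteq K$ converging to $a$, and $\sup K<1$. The plan is to work in a countable language with a unary predicate $P$ and constants $\{c_n\}_{n\ge 0}$, and to set
\[
\sigma_n := P(c_{n+1}) \to P(c_n).
\]
By the \g residuum, $\sigma_n^{\mathcal{M}}$ equals $1$ when $P(c_{n+1})^{\mathcal{M}} \le P(c_n)^{\mathcal{M}}$ and equals $P(c_n)^{\mathcal{M}}$ otherwise; since $1\notin K$, requiring $\sigma_n^{\mathcal{M}} \in K$ forces the values $P(c_n)^{\mathcal{M}}$ to be strictly decreasing and to lie in $K$. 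I then plan to augment $\{\sigma_n\}$ with a quantified sentence $\tau$ built from $\exists x\,\forall y\,(P(y)\to P(x))$, whose \g interpretation equals $1$ if the supremum of $P$ is attained and $\sup_b P^{\mathcal{M}}(b)$ otherwise. In any candidate $K$-model of $\Sigma:=\{\sigma_n\}\cup\{\tau\}$, the $\sigma_n$'s force a strictly monotone chain accumulating at some point of $K$, while the \g evaluation of $\tau$ is pushed, via the attainment/non-attainment dichotomy, to $1\notin K$. Finite $K$-satisfiability is verified by realising $P(c_n)^{\mathcal{M}_0}$ along a finite initial segment of $(k_n)$ and tailoring the rest of the domain so that $\tau^{\mathcal{M}_0}\in K$; only the whole theory triggers the discontinuous jump.

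\emph{Direction $(\Rightarrow)$: if $K$ is finite or $1\in K$, show $\mathbf{G}\forall$ is $K$-compact.} The finite case follows directly from Fact~\ref{finitek}. For the case $1\in K$, given a finitely $K$-satisfiable theory $\Sigma$ over a countable language, I would adapt a Henkin-style construction: introduce countably many fresh constants as witnesses for all existential (and, via the usual dualisation, universal) formulas, and inductively extend $\Sigma$ to a maximal finitely $K$-satisfiable theory $\Sigma^*$, using $1\in K$ as a safe default whenever the next step leaves freedom in the truth value of a sentence. A canonical model is then built from equivalence classes of closed terms, with predicate values read off from $\Sigma^*$. The standard compactness of $\mathbf{G}\forall$ (Fact~\ref{comgodel}) provides the backbone that keeps the inductive extension finitely $K$-satisfiable at each stage, and the closedness of $K$ is used to pass to limits of truth values at the end of the construction.

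\emph{Main obstacle.} The hardest step is in direction $(\Leftarrow)$: designing the quantified sentence $\tau$ so that its \g value really is forced outside $K$ in every model of the $\sigma_n$'s, and showing that all finite fragments escape this obstruction. The argument hinges on the precise discontinuity of the \g implication at the accumulation point $a$ and requires a careful case split on whether $a$ is approached from above or from below, and on whether the supremum of $P$ is attained. In direction $(\Rightarrow)$, the subtle point is maintaining finite $K$-satisfiability along the Henkin extension; this is where the hypothesis $1\in K$ is genuinely used, and where Fact~\ref{comgodel} is invoked to absorb the countably many witnessing steps.
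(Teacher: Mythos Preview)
The paper does not prove Fact~\ref{pour}; it records it as a result from \cite{pourmahtavana2012} and only remarks that it is derived as a consequence of Fact~\ref{comgodel}. So there is no in-paper argument to compare against beyond that attribution.

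\medskip

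\textbf{A genuine gap in your $(\Leftarrow)$ direction.} Your theory $\Sigma=\{\sigma_n:n\in\omega\}\cup\{\tau\}$ is \emph{not} a counterexample. You correctly compute that $\tau^{\mathcal M}=1$ when $\sup_b P^{\mathcal M}(b)$ is attained and $\tau^{\mathcal M}=\sup_b P^{\mathcal M}(b)$ otherwise, but you then claim $\tau$ is ``pushed to $1\notin K$'' in every $K$-model. That fails precisely in the non-attainment case, and nothing in $\Sigma$ rules that case out. Concretely, take $K=[0,\tfrac12]$ (closed, infinite, $1\notin K$), let $M=\omega$, $c_n^{\mathcal M}=n$, and $P^{\mathcal M}(n)=\tfrac12-\tfrac{1}{n+2}$. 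Then $P(c_n)^{\mathcal M}$ is strictly increasing with all values in $K$, so each $\sigma_n^{\mathcal M}=P(c_n)^{\mathcal M}\in K$; and since $\sup P=\tfrac12$ is not attained, $\tau^{\mathcal M}=\tfrac12\in K$. Hence $\mathcal M$ is a $K$-model of the whole theory $\Sigma$. More generally, closedness of $K$ forces the limit of your monotone chain to lie in $K$, and a model whose domain is exactly $\{c_n^{\mathcal M}\}$ then gives $\tau^{\mathcal M}\in K$. The attainment/non-attainment dichotomy cuts the wrong way here; any sentence with the truth function you describe will have the same defect. Producing a genuine counterexample for arbitrary closed infinite $K\subseteq[0,1)$ requires a more delicate construction than a single $\tau$ of this shape on top of the $\sigma_n$'s.

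\medskip

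\textbf{On your $(\Rightarrow)$ direction.} Reducing the finite-$K$ case to Fact~\ref{finitek} is correct, and invoking Fact~\ref{comgodel} for the case $1\in K$ is in line with the paper's one-line attribution. However, your Henkin sketch is too thin to evaluate: ``use $1\in K$ as a safe default'' does not explain why each witnessing step preserves finite $K$-satisfiability, nor why the canonical term model ends up assigning values in $K$ (rather than just in $\{1\}$) to every $\sigma\in\Sigma$. Note also that finite $K$-satisfiability with $1\in K$ does not reduce to ordinary finite satisfiability (e.g.\ $K=\{0,1\}$), so the appeal to Fact~\ref{comgodel} cannot be a one-step reduction; the missing mechanism needs to be spelled out.
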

\begin{fact}\label{kcompgodelprop}
Assume that $K\subseteq(0,1]$ containing $1$. Then {\bf G} as well as $\mathbf{\Pi}$ is $K$-compact.
\end{fact}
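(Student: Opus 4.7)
The plan is to reduce $K$-compactness for both $\mathbf{G}$ and $\mathbf{\Pi}$ to ordinary classical propositional compactness. The key observation is that the hypothesis $K\subseteq(0,1]$ forces any $K$-model to assign every formula a strictly positive value, and the truth functions of $\mathbf{G}$ and $\mathbf{\Pi}$---though not continuous in the Euclidean topology---both restrict to the classical propositional connectives on the sub-chain $\{0,1\}\subseteq[0,1]$.

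The first step is a Boolean collapse lemma: for an evaluation $v:P\to[0,1]$ in $\mathbf{G}$ or $\mathbf{\Pi}$, define the classical valuation $v^*:P\to\{0,1\}$ by $v^*(p)=1$ iff $v(p)>0$. By induction on the complexity of $\varphi$ one shows
\[
v(\varphi)>0\iff v^*(\varphi)=1.
\]
The cases of $\bot$, of atomic formulas, and of strong conjunction are immediate, since $T_G(x,y)=\min\{x,y\}$ and $T_\pi(x,y)=x\cdot y$ both vanish precisely when one factor vanishes. The implication case is the only one requiring care: directly from Table~\ref{tnorms} one verifies that, in both logics, $v(\varphi\to\psi)=0$ iff $v(\varphi)>0=v(\psi)$; hence $v(\varphi\to\psi)>0$ iff $v(\varphi)=0$ or $v(\psi)>0$, which by the inductive hypothesis is equivalent to $v^*(\varphi)=0$ or $v^*(\psi)=1$. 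Since both $\Rightarrow_G$ and $\Rightarrow_\pi$ coincide with classical implication when restricted to $\{0,1\}^2$, this last condition is exactly $v^*(\varphi\to\psi)=1$.

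Granting the lemma, the rest is one line. Let $\Sigma$ be finitely $K$-satisfiable and, for each finite $F\subseteq\Sigma$, fix a $K$-model $v_F$; since $K\subseteq(0,1]$ we have $v_F(\varphi)>0$ for every $\varphi\in F$, and by the lemma $v_F^*$ is a classical propositional model of $F$. Hence $\Sigma$ is finitely satisfiable in classical propositional logic, and Tychonoff compactness of $\{0,1\}^P$ supplies a Boolean valuation $v^*:P\to\{0,1\}$ that classically satisfies every $\varphi\in\Sigma$. Reading $v^*$ as a $[0,1]$-valued evaluation with range in $\{0,1\}$ and using once more that the $\mathbf{G}$ and $\mathbf{\Pi}$ connectives agree with the classical ones on $\{0,1\}$, we obtain $v^*(\varphi)=1\in K$ for every $\varphi\in\Sigma$, so $v^*$ is a $K$-model of $\Sigma$. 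The hypothesis $1\in K$ is used only at this final step (to place the Boolean output inside $K$), and $K\subseteq(0,1]$ only to force $v_F(\varphi)>0$; the main---and only---point of difficulty is the implication case of the collapse lemma, and in particular no topology on $[0,1]$, no ultraproduct, and no hypothesis on the cardinality of $P$ is required for this propositional statement.
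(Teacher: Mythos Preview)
Your proof is correct and follows essentially the same route the paper sketches: the paper attributes the argument to \cite{cintnav2004} and says it is ``proved using the interpretation of double negation and the compactness theorem in classical logic,'' and your Boolean collapse $v^*(p)=1\Leftrightarrow v(p)>0$ is precisely the double-negation map $x\mapsto\neg\neg x$ in $\mathbf{G}$ and $\mathbf{\Pi}$, after which both arguments invoke classical propositional compactness. Your write-up simply spells out explicitly the inductive lemma that the paper leaves implicit in the phrase ``interpretation of double negation.''
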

Fact \ref{kcompgodelprop1} is an easy consequence of the semantic of \g logic. Indeed, since the set of atomic propositions is finite, we can only form finitely many formulas with different semantic.
The common idea in the proof of Fact \ref{comgodelp} and Fact \ref{comgodel} is that the \g algebra of $T$-equivalent formulas could be embedded into the standard \g algebra $[0,1]$. It seems that this idea is originated by Dummet \cite{Dummett59} to prove the completeness theorem for {\bf G} which implies Fact \ref{comgodelp} (see also \cite{hajek98}). This idea is also used by Horn \cite{horn69} to prove the completeness theorem for {\bf G}$\forall$ which argues Fact \ref{comgodel} (again, see also \cite{hajek98}). An easy consequence of Facts \ref{comgodelp} and
\ref{comgodel} is Fact \ref{finitek} \cite{cintnav2004}. A more interesting consequence of the Fact \ref{comgodel} is derived by \cite{pourmahtavana2012} which is given in Fact \ref{pour}. Fact \ref{kcompgodelprop} is proved using the interpretation of double negation and the compactness theorem in classical logic \cite{cintnav2004}. Remind that double negation in \g logic and  product logic is interpreted by the following function.
\begin{center}
$\neg\neg x=\left\{\begin{array}{cc}
1&x>0\\
0&x=0
\end{array}\right.$.
\end{center}
Uncountability of the underlying language in Fact \ref{comgodel} leads to the collapse of the compactness in {\bf G}$\forall$.
‎\begin{example}\label{godelf}‎
‎Let $\mathcal{L}$ be a relational‎ ‎language contains uncountably many unary predicate‎
‎symbols $\{R(x)\}\cup\{\rho_i(x)\}_{i\in\omega_2}$‎. ‎Set‎,
‎\begin{center}‎
‎$T=\Big\{\neg\forall x\,R(x)‎, ‎\forall x\,\Big(\big(R(x)\to\rho_1(x)\big)\to R(x)\Big)\Big\}‎
‎\cup\Big\{\forall x\,\Big(\big(\rho_j(x)\to\rho_i(x)\big)\to\rho_j(x)\Big)‎: ‎i>j\Big\}_{i,j\in\omega_2}$‎.
‎\end{center}‎
Remind that in \g logic
\begin{center}
$\neg\varphi^\mathcal{M}(\bar{a})=\left\{\begin{array}{cc}
1&\varphi^\mathcal{M}(\bar{a})=0\\
0&\varphi^\mathcal{M}(\bar{a})>0
\end{array}\right.~~~~~,$
$~~~~~\left((\varphi \to \psi)\to\varphi\right)^\mathcal{M}(\bar{a}) =\left\{\begin{array}{cc}
1&\varphi^\mathcal{M}(\bar{a})<\psi^\mathcal{M}(\bar{a})<1\\
\psi^\mathcal{M}(\bar{a})&\varphi^\mathcal{M}(\bar{a})\ge\psi^\mathcal{M}(\bar{a})
\end{array}\right.$.
\end{center}
Assume that (in \g logic) $\mathcal{M}\models T$. Thus
\begin{itemize}
\item $\mathcal{M}\models\neg\forall x\,R(x)$ and so there‎ ‎is an element $a\in M$ such that $R^\mathcal{M}(a)<1$‎,
\item‎ $\mathcal{M}\models \forall x\,\Big(\big(R(x)\to\rho_1(x)\big)\to R(x)\Big)$‎, ‎thus‎ ‎$\rho_1^\mathcal{M}(a)<R^\mathcal{M}(a)<1$‎,
\item ‎$\mathcal{M}\models\forall x\,\Big(\big(\rho_j(x)\to\rho_i(x)\big)\to\rho_j(x)\Big)$ for every $i>j\in\omega_2$‎. so we have ‎
\begin{center}
$\rho_{\omega_2}^\mathcal{M}(a)<...<\rho_2^\mathcal{M}(a)<\rho_1^\mathcal{M}(a)<R^\mathcal{M}(a)<1$‎.
\end{center}
\end{itemize}
‎a contradiction with the cardinality of $[0,1]$‎. ‎But‎, ‎one can easily verify that $T$ is finitely satisfiable‎.
‎\end{example}‎
In the case of propositional \g logic, however the expressive power of the language prevent us to offer a similar counter example. Indeed we could no express that the truth value of a proposition is strictly less than $1$.
Yet, if $K$ be an infinite subset of $[0,1)$, then the following example show that with an uncountable set of atomic propositions, the $K$-compactness does not hold in {\bf G}.
\begin{example}\label{counterexamplegodelp}
Assume that $K$ be an infinite subset of $[0,1)$ and $T=\{(p_i\to p_j\}_{i\le j, i,j\in \omega_2}$. As K is infinite, every finite subset $T_f$ of $T$ is $K$-satisfiable. Indeed if
\begin{center}
$\ds m=\min\{i: (p_i\to p_j)\in T_f$ for some $j\}$ and $\ds M=\max\{j: (p_i\to p_j)\in T_f$ for some $i\}$,
\end{center}
then we can choose a $K$-evaluation $v$ such that $v(p_m)>...>v(p_M)$, and so $v$ is a $K$-model of $T_f$.
But since the cardinality of $K$ is at most $\omega_1$, $T$ is not satisfiable.
\end{example}
$K$-Compactness fails over $\mathbf{\Pi}$ even for finitely many atomic symbols \cite{cintnav2004}[Theorem 6.2]. The following example show a similar result for $\mathbf{\Pi}\forall$.
‎\begin{example}\label{prodf}‎
‎Let $\mathcal{L}=\{R‎, ‎\rho\}$ be a relational language in which $R$ and $\rho$ are unary predicate symbols. Assume that‎
\begin{center}‎
$T=\Big\{\neg\forall x\,\big(R(x)\vee\rho(x)\big)‎, ‎\neg\neg\forall x\,R(x)‎, ‎\forall x\,\big( R(x)\to\rho^n(x)\big)\Big\}$‎.
\end{center}‎
‎If (in product logic) $\mathcal{M}\models T$‎, ‎then $\mathcal{M}\models\neg\forall x\,\big(R(x)\vee\rho(x)\big)$ and so there‎
‎is an element $b\in M$ such that $\max\{R^\mathcal{M}(b),\rho^\mathcal{M}(b)\}<1$‎. On the other hand‎, ‎
‎$\mathcal{M}\models\neg\neg\forall x\,R(x)$ and so ‎$R^\mathcal{M}(a)>0$ for all $a\in M$, particularly ‎
‎$0<R^\mathcal{M}(b)<1$‎. But,
‎$\mathcal{M}\models\forall x\,\big( R(x)\to\rho^n(x)\big)$‎, for each $n\ge 1$‎, and so we have
‎$\displaystyle\inf_{a\in M}\big(R^\mathcal{M}(a)\to(\rho^n)^\mathcal{M}(a)\big)=1$‎.
Whence $\big(R^\mathcal{M}(b)\to(\rho^n)^\mathcal{M}(b)\big)=1$‎. So‎
‎$0<R^\mathcal{M}(b)\le(\rho^n)^\mathcal{M}(b)<\rho^\mathcal{M}(b)<1$ for all $n\ge 1$‎, that is impossible‎.
‎Thus $T$ is not satisfiable‎. ‎However‎, ‎obviously $T$ is finitely satisfiable‎.
‎\end{example}‎
In the rest of the paper we develop the results about compactness and $K$-compactness for continuous t-norm based fuzzy logics, specially for \g and product logic.
\section{Metrically Semantic for Basic Logic}
The most popular choice of semantic in fuzzy logics based on the truth value set $[0,1]$ in which $1$ is considered for absolute truth and $0$ for absolute falsity. This semantic is not sanctified, however, and we use a reverse semantical meaning fits more for our purpose, that is $0$ and $1$ represents absolute truth and absolute falsity, respectively. Indeed, this semantic makes the interpretation of the equivalence connective a metric that the interpretation of all logical connectives are continuous with respect to it's induced topology on $[0,1]$. Because of this reason, we call this semantic "metrically semantic" of fuzzy logics.

To adopt connectives suitably with the metrically semantic, firstly, the strong conjunction $\&$ would be interpreted by a continuous t-conorm instead of a continuous t-norm. A continuous t-conorm $S$ is a continuous function $S:[0,1]^2\to [0,1]$
(in Euclidean topology) commutative, associative, non-decreasing on both arguments, in which $S(0,x)=x$ for all $x\in[0,1]$. One could easily derived that $S(1,x)=1$ for all $x\in[0,1]$.

The appropriate interpretation of the implication connective $\dotto:[0,1]^2\to[0,1]$ is the residuum of the t-conorm $S$, defined by the adjoint property,
\begin{center}
for all $x, y, z\in[0,1]$, $S(z,x)\ge y$ iff $z\ge x\dotto y$.
\end{center}
The continuity of $S$ implies that $x\dotto y=\min\{z: S(z,x)\ge y\}$. The
well known continuous t-conorms and their residua are listed in Table~\ref{tconorms}.
\begin{table}[h]
\centering
\caption{Some continuous t-conorms and their residua}
\label{tconorms}
\begin{tabular}{l l l l}
\toprule
\rowcolor[gray]{.8}
logic name&t-conorm&residuum\\
\midrule
\lo&$S_L(x,y)=\min\{x+y,1\}$&$x\dotto_L y=
\left\{\begin{array}{cc}
0& x\ge y\\
y-x& x<y
\end{array}\right.$\\
\g&$S_G(x,y)=\max\{x,y\}$&$x\dotto_G y=
\left\{\begin{array}{cc}
0& x\ge y\\
y& x<y
\end{array}\right.$\\
Product&$S_\pi(x,y)=x+y-xy$&$x\dotto_\pi y=
\left\{\begin{array}{cc}
0& x\ge y\\
\displaystyle\frac{y-x}{1-x}&x<y
\end{array}\right.$\\
\bottomrule
\end{tabular}
\end{table}

In metrically semantic, an evaluation is a map $v$ from the set of all propositions to $[0,1]$ with the following properties
\begin{itemize}
\item $v(\bot)=1$,
\item $v(\varphi \&\psi)=S\left(v(\varphi),v(\psi)\right)$,
\item $v(\varphi\to \psi)=v(\varphi)\dotto v(\psi)$.
\end{itemize}
$v$ models $\varphi$ whenever $v(\varphi)=0$. Other concepts are defined in a similar way. For other logical connectives, interpretations in metrically semantic could be calculated relevantly. For example, since $S$ is continuous, one could easily verify that
\begin{eqnarray*}
&&v(\varphi\wedge \psi)=
\max\{v(\varphi),v(\psi)\},\\
&&v(\varphi\vee \psi)=
\min\{v(\varphi),v(\psi)\},
\end{eqnarray*}
which are the dual of their interpretations in the semantic based on continuous t-norms.

In the predicate case, for a first-order language $\mathcal{L}$ and an $\mathcal{L}$-structure $\mathcal{M}$, we could dedicated the following interpretations in metrically semantic.
\begin{itemize}
\item If $\varphi(\bar{x})=\forall y\ \psi(y,\bar{x})$ then $\varphi^\mathcal{M}(\bar{a})=\displaystyle\sup_{b\in M}\{\psi^\mathcal{M}(b,\bar{a})\}$
\item If $\varphi(\bar{x})=\exists y\ \psi(y,\bar{x})$ then $\varphi^\mathcal{M}(\bar{a})=\displaystyle\inf_{b\in M}\{\psi^\mathcal{M}(b,\bar{a})\}$
\end{itemize}
For two t-conorms $S_1$ and $S_2$, $S_1$ is weaker than $S_2$, in symbols $S_1\le S_2$, whenever $S_1(x,y)\le S_2(x,y)$ for all $x,y\in[0,1]$. Obviously, $S_G\le S_\pi\le S_L$.

The axioms of {\bf BL} are hold here as well. However, note that their semantical meanings are as the dual ones in the everyday semantic. The following facts about arbitrary continuous t-conorm $S$ and it's residua $\dotto$, are used in the further arguments.
\begin{lemma}\label{usefullem}
For each continuous t-conorm $S$ and it's residua $\dotto$, the followings are true.
\begin{enumerate}[label={\thelemma.\arabic*}]
\item $S(x,x\dotto y)=\max\{x,y\}$ \label{fa1}
\item $x\dotto y\ge (y\dotto z)\dotto (x\dotto z)$ \label{fa2}
\item $y\ge x\dotto y$ \label{fa3}
\item $S(x,y)\ge\max\{x,y\}$ \label{fa4}
\item $S(x\dotto y,x'\dotto y')\ge S(x,x')\dotto S(y,y')$ \label{fa5}
\end{enumerate}
\end{lemma}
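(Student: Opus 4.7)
The plan is to derive all five items from the adjoint correspondence $S(z,x)\ge y \iff z\ge x\dotto y$, together with the boundary condition $S(0,x)=x$, monotonicity, commutativity, associativity, and continuity of $S$. I would prove (4) first, then (1), and then derive (3), (2), and (5) as formal consequences of (1) and the adjoint property. For (4), monotonicity and the boundary condition suffice: $S(x,y)=S(y,x)\ge S(y,0)=y$, and symmetrically $S(x,y)\ge x$, so $S(x,y)\ge\max\{x,y\}$.

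For (1), I would first note that continuity of $S$ makes $\{z:S(z,x)\ge y\}$ a closed subset of $[0,1]$, and (4) implies $S(1,x)=1\ge y$, so the set is nonempty. Its infimum $x\dotto y$ therefore lies in it, giving $S(x,x\dotto y)\ge y$; combined with (4), $S(x,x\dotto y)\ge\max\{x,y\}$. For the reverse direction I split cases. If $x\ge y$, then $S(0,x)=x\ge y$ forces $x\dotto y=0$, so $S(x,0)=x=\max\{x,y\}$. If $x<y$, then $z\mapsto S(z,x)$ is continuous and non-decreasing, takes the value $x<y$ at $z=0$ and $1\ge y$ at $z=1$, so the intermediate value theorem yields some $z_0$ with $S(z_0,x)=y$; a standard minimality argument using continuity then forces $S(x,x\dotto y)=y=\max\{x,y\}$. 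Item (3) follows at once: by (4), $S(y,x)\ge y$, which the adjoint property translates to $y\ge x\dotto y$.

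For (2), two applications of the adjoint recast $x\dotto y\ge(y\dotto z)\dotto(x\dotto z)$ as $S(S(x\dotto y,y\dotto z),x)\ge z$; reordering by associativity and commutativity and applying (1) twice rewrites the left-hand side as $S(\max\{x,y\},y\dotto z)\ge S(y,y\dotto z)=\max\{y,z\}\ge z$. Statement (5) follows by the same strategy: the claim is equivalent via the adjoint to $S(S(x\dotto y,x'\dotto y'),S(x,x'))\ge S(y,y')$, which associativity, commutativity, and (1) reduce to $S(\max\{x,y\},\max\{x',y'\})\ge S(y,y')$, immediate from monotonicity.

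The main obstacle I anticipate is not technical depth but careful use of continuity in (1): one must verify that the infimum defining $x\dotto y$ is attained, and handle the case $x<y$ with the intermediate value theorem so that $S(x,x\dotto y)$ equals exactly $y$ rather than some value between $y$ and $\max\{x,y\}$. Once (1) is firmly in place, the remainder is a disciplined exercise in shuttling $S$ and $\dotto$ across the adjunction.
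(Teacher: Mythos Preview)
Your proof is correct and proceeds by a direct algebraic argument from the adjoint correspondence, monotonicity, and continuity of $S$. The paper takes a different route: it reads each of the five items as the semantic content, in the metric semantic, of a BL axiom or a BL-provable formula. Concretely, item (1) is obtained from the definition $\varphi\wedge\psi := \varphi\&(\varphi\to\psi)$ together with the earlier observation that $v(\varphi\wedge\psi)=\max\{v(\varphi),v(\psi)\}$; item (2) is the interpretation of axiom (A1); and items (3), (4), (5) are the interpretations of the BL-theorems $\varphi\to(\psi\to\varphi)$, $(\varphi\&\psi)\to(\varphi\wedge\psi)$, and $\big((\varphi_1\to\psi_1)\&(\varphi_2\to\psi_2)\big)\to\big((\varphi_1\&\varphi_2)\to(\psi_1\&\psi_2)\big)$, respectively. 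Your approach is self-contained and purely order-theoretic, making no appeal to the soundness of BL in the t-conorm semantic (which the paper asserts but does not verify, relying implicitly on the t-norm/t-conorm duality $S(x,y)=1-T(1-x,1-y)$). The paper's approach is terser and exploits the logical infrastructure already set up; yours would stand on its own even outside that framework.
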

\begin{proof}
\ref{fa1} is follows from the definition of $\wedge$ that is $\varphi\wedge\psi:=\varphi\&(\varphi\to\psi)$.
\ref{fa2} is an obvious consequence of (A1). \ref{fa3}, \ref{fa4}, and \ref{fa5} are follows from
\ref{l1}, \ref{l2}, and \ref{l3}, respectively.
\end{proof}
The main idea that we chose the metrically semantic is the interpretation of the equivalence connective. Indeed an easy argument show that for any continuous t-conorm $S$ weaker than the \lo t-conorm, the interpretation of the equivalence connective is a metric on $[0,1]$.
\begin{theorem}\label{metric}
Let $S$ be a continuous t-conorm and $\dotto$ be the residue of $S$. Then, for any $x,y,z\in[0,1]$,
\begin{center}
$x\dotto y\le S\left(x\dotto z,z\dotto y\right)$.
\end{center}
Specially if $S$ is weaker than the \lo t-conorm, then the interpretation of the
equivalence connective is a metric on $[0,1]$.
\end{theorem}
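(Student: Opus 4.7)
The plan is to first establish the inequality $x\dotto y\le S(x\dotto z,z\dotto y)$ by a residuation argument, and then to deduce the metric property of $d(x,y):=S(x\dotto y,y\dotto x)$ under the hypothesis $S\le S_L$.

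For the main inequality, the adjoint characterization $x\dotto y\le w$ iff $S(w,x)\ge y$ reduces the claim to showing $S(S(x\dotto z,z\dotto y),x)\ge y$. Associativity and commutativity of $S$ rearrange the left side to $S(z\dotto y,S(x,x\dotto z))$; Lemma~\ref{fa1} gives $S(x,x\dotto z)=\max\{x,z\}\ge z$, and then monotonicity of $S$ together with a second use of Lemma~\ref{fa1} yields $S(z\dotto y,S(x,x\dotto z))\ge S(z\dotto y,z)=\max\{z,y\}\ge y$, as required.

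For the metric claim, I would first observe that the adjoint forces $a\ge b$ to imply $a\dotto b=0$, so one of $x\dotto y$ and $y\dotto x$ always vanishes and $d(x,y)=\min\{x,y\}\dotto\max\{x,y\}$. Symmetry is immediate, $d(x,x)=0$ follows from $x\dotto x=0$, and if $d(x,y)=0$ then Lemma~\ref{fa4} forces both $x\dotto y=0$ and $y\dotto x=0$, giving $x\ge y$ and $y\ge x$. For the triangle inequality $d(x,y)\le d(x,z)+d(z,y)$ I would split cases on the position of $z$ relative to $x$ and $y$ (WLOG $x\le y$): in the central case $x\le z\le y$, combine the main inequality with the pointwise bound $S(a,b)\le S_L(a,b)\le a+b$ coming from $S\le S_L$ to get $x\dotto y\le(x\dotto z)+(z\dotto y)$; in the flanking cases $z\le x$ or $z\ge y$, antimonotonicity of $\dotto$ in the first argument (respectively monotonicity in the second) shows that a single summand on the right already dominates $d(x,y)$.

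The one substantive obstacle is this middle case of the triangle inequality, which is precisely where the hypothesis $S\le S_L$ must be used to convert the t-conorm on the right-hand side into ordinary addition. Everything else is mechanical residuation supported by the already-established Lemma~\ref{usefullem}.
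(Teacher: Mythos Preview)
Your argument is correct. Two points of comparison with the paper's proof are worth recording.

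For the inequality $x\dotto y\le S(x\dotto z,z\dotto y)$, the paper instead invokes Lemma~\ref{fa2} (the residuated form of axiom (A1)) to obtain $x\dotto z\ge (z\dotto y)\dotto(x\dotto y)$ and then applies adjointness once. Your route via two applications of Lemma~\ref{fa1} is equally valid and perhaps more transparent, since it avoids the variable permutation needed to match \ref{fa2}.

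For the triangle inequality, your case split on the position of $z$ is unnecessary. The paper observes that $a\dotto b\le d(a,b)$ holds for \emph{all} $a,b$ (one side of the piecewise definition is always zero), so the main inequality together with $S\le S_L$ immediately gives
\[
x\dotto y\;\le\; S(x\dotto z,z\dotto y)\;\le\; S_L\bigl(d(x,z),d(z,y)\bigr)\;\le\; d(x,z)+d(z,y)
\]
for arbitrary $x,y,z$, and the symmetric bound for $y\dotto x$ follows the same way. This handles your three cases in one line; the flanking cases you isolate are already subsumed because when, say, $z\le x\le y$, the term $x\dotto z$ simply vanishes. Your monotonicity arguments in those cases are correct but redundant.
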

\begin{proof}
Define $d:[0,1]^2\to[0,1]$ by $d(x,y)=\left\{\begin{array}{cc}
x\dotto y& x\le y\\
y\dotto x& x>y
\end{array}\right.$. As $x\dotto y=0$ for each $x\ge y$, and $S(x,0)=x$ for each $x$, one could easily verify that
for any evaluation $v$, $v(\varphi \leftrightarrow \psi)= d\left(v(\varphi),v(\psi)\right)$.

Obviously, $d(x,x)=0$ for $x\in[0,1]$. On the other hand, if $d(x,y)=0$ and we assume that $x<y$, then
 $d(x,y)=x\dotto y=0$. But $x\dotto y=\min\{z: S(z,x)\ge y\}$. Thus, $0\in\{z: S(z,x)\ge y\}$ that is $S(0,x)\ge y$ which means that $x\ge y$, a contradiction. By symmetry, $x>y$ also leads to a contradiction. So $x=y$.

Symmetric property of $d$ is clear. In order to prove the triangle inequality, by Lemma \ref{fa2}
for arbitrary $x,y,z,\in [0,1]$ we have $x\dotto z\ge (z\dotto y)\dotto (x\dotto y)$.
Now, adjointness of $S$ and $\dotto$ implies that
\begin{center}
$S\left(x\dotto z,z\dotto y\right)\ge x\dotto y$.
\end{center}
Furthermore, since $S\le S_L$ we have
\begin{center}
$x\dotto y\le S\left(x\dotto z,z\dotto y\right)
\le S_L\left(x\dotto z,z\dotto y\right)\le d(x,z)+d(z,y)$.
\end{center}
A similar argument show that $y\dotto x\le d(x,z)+d(z,y)$, that completes the proof of the triangle inequality.
\end{proof}

The corresponding metrics which interprets the equivalence connective of the logics listed in
Table \ref{tconorms} are proposed in Figure \ref{fig1}. Note that the white color in Figure \ref{fig1} is the absolute truth while the black color describe the absolute falsity.
\begin{figure}[h]
  \centering
  \begin{minipage}[b]{0.3\textwidth}
    \includegraphics[width=\textwidth]{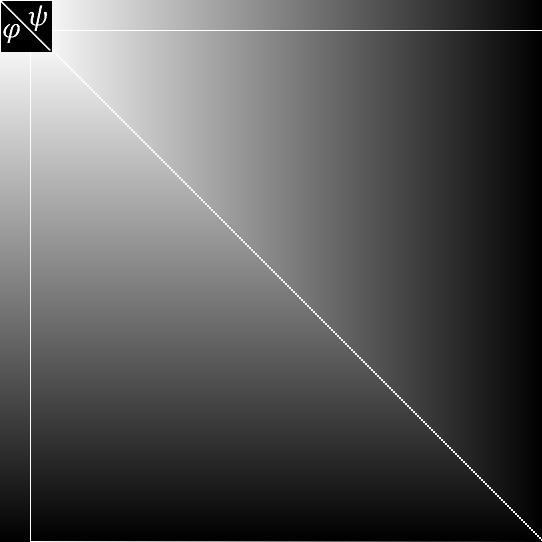}
  \centering\scalebox{0.9}[1]{$d_G(x,y)=\left\{\begin{array}{ll}
\max\{x,y\}&x\ne y\\
0&x=y
\end{array}\right.$}
  \end{minipage}
  \hfill
  \begin{minipage}[b]{0.3\textwidth}
    \includegraphics[width=\textwidth]{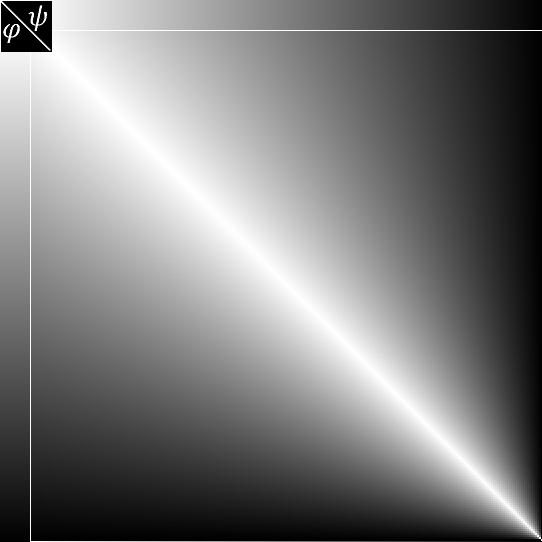}
  \centering\scalebox{0.9}[1]{
  \medmuskip=-3mu
\thinmuskip=-2mu
\thickmuskip=-2mu
\nulldelimiterspace=-1pt
\scriptspace=0pt
  $d_\pi(x,y)=\left\{\begin{array}{@{}l@{}l}
|x-y|/(1-\min\{x,y\})&x\ne y\\
0&x=y
\end{array}\right.$}
  \end{minipage}
  \hfill
  \begin{minipage}[b]{0.3\textwidth}
    \includegraphics[width=\textwidth]{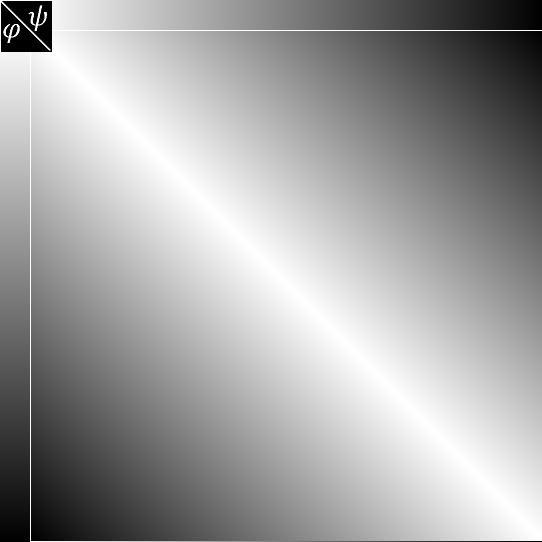}
  \centering\scalebox{0.9}[1]{$\begin{array}{ll}
d_L(x,y)=|x-y|&\\
&\end{array}$}
  \end{minipage}
\caption{interpretation of the equivalence connective in metrically semantic}
\label{fig1}
\end{figure}

The metric $d$ introduced in Theorem \ref{metric}, induced a metric $\mathbf{d}$ on $[0,1]^2$ as follows.
\begin{lemma}\label{metricd}
Let $S$ be a continuous t-conorm weaker than $S_L$ and $\dotto$ be it's residua. Furthermore let $d$ be the metric
defined in Theorem \ref{metric}. The mapping $\mathbf{d}\big((x_1,x_2),(y_1,y_2)\big)=S\big(d(x_1,y_1),d(x_2,y_2)\big)$
define a metric on $[0,1]^2$.
\end{lemma}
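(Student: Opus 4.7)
The plan is to check the four metric axioms for $\mathbf{d}$ in order (non-negativity with vanishing on the diagonal, separation, symmetry, triangle inequality), using only the fact that $d$ is a metric (Theorem \ref{metric}), the general t-conorm identities from Lemma \ref{usefullem}, and the assumption $S\le S_L$. The first three axioms are essentially bookkeeping; the real content is the triangle inequality, which I would prove in a stronger ``$S$-triangle'' form and then relax to the ordinary one.

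For the first three axioms: $\mathbf{d}(p,p)=S(0,0)=0$ since $0$ is the neutral element of $S$; for separation, if $\mathbf{d}\bigl((x_1,x_2),(y_1,y_2)\bigr)=0$ then by Lemma \ref{fa4} we have $\max\{d(x_1,y_1),d(x_2,y_2)\}\le S(d(x_1,y_1),d(x_2,y_2))=0$, so each coordinate distance vanishes and Theorem \ref{metric} forces $x_i=y_i$; symmetry is immediate from the symmetry of $d$ and the commutativity of $S$.

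For the triangle inequality I would first upgrade Theorem \ref{metric} from an inequality about $\dotto$ to the coordinate-wise $S$-triangle inequality $d(x,z)\le S\bigl(d(x,y),d(y,z)\bigr)$: splitting on whether $x\le z$ or $x>z$, one of $x\dotto z$, $z\dotto x$ equals $d(x,z)$ and the other is $0$; applying Theorem \ref{metric} in the appropriate direction and then using $u\dotto v\le d(u,v)$ together with the monotonicity of $S$ gives the stated bound. Applying this in each coordinate and using the monotonicity, associativity, and commutativity of $S$, I would rearrange
\begin{equation*}
\mathbf{d}(p,r)=S\bigl(d(x_1,z_1),d(x_2,z_2)\bigr)\le S\bigl(S(d(x_1,y_1),d(y_1,z_1)),\,S(d(x_2,y_2),d(y_2,z_2))\bigr)
\end{equation*}
into $S\bigl(\mathbf{d}(p,q),\mathbf{d}(q,r)\bigr)$. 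Finally, the hypothesis $S\le S_L$ converts this to the usual additive triangle inequality via $S(u,v)\le S_L(u,v)=\min\{u+v,1\}\le u+v$.

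The only step with any friction is the coordinate-wise upgrade of Theorem \ref{metric}: one must be slightly careful that swapping $x\dotto y$ for $d(x,y)$ is always valid in the right direction when $S$ is applied, but this is immediate from monotonicity since both $x\dotto y$ and $y\dotto x$ are dominated by $d(x,y)$. Every other step is a direct application of the t-conorm axioms or of Lemma \ref{usefullem}, so no nontrivial obstacle is expected.
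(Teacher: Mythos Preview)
Your proposal is correct and follows essentially the same route as the paper: verify the first three axioms directly, establish the $S$-triangle inequality $\mathbf{d}(p,r)\le S\bigl(\mathbf{d}(p,q),\mathbf{d}(q,r)\bigr)$ via the coordinate-wise bound $d(x,z)\le S\bigl(d(x,y),d(y,z)\bigr)$ together with associativity and commutativity of $S$, and then relax to the additive triangle inequality using $S\le S_L$. Your explicit derivation of the coordinate-wise $S$-triangle inequality for $d$ from Theorem \ref{metric} is in fact more detailed than the paper, which simply invokes it as a remark.
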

\begin{proof}
Let's denote $(x_1,x_2)$ by $\bar{x}$. We use this notation hereafter.
Obviously, $\mathbf{d}(\bar{x},\bar{y})=0$ if and only if $\bar{x}=\bar{y}$. Furthermore, using the symmetric
property of $d$ we get it for $\mathbf{d}$. For transitivity let $\bar{x},\bar{y},\bar{z}\in [0,1]^2$. Using Remark \ref{dlesd} and associativity of t-conorm $S$ the proof will be completed.
\begin{eqnarray*}
  \mathbf{d}(\bar{x},\bar{y}) &=& S\big(d(x_1,y_1),d(x_2,y_2)\big)\\
  &\le& S\Big(S\big(d(x_1,z_1),d(z_1,y_1)\big),S\big(d(x_2,z_2),d(z_2,y_2)\big)\Big)\\
  &=& S\Big(S\big(d(x_1,z_1),d(x_2,z_2)\big),S\big(d(z_1,y_1),d(z_2,y_2)\big)\Big)\\
  &=&S\Big(\mathbf{d}(\bar{x},\bar{z}),\mathbf{d}(\bar{z},\bar{y})\Big)\\
  &\le&S_L\Big(\mathbf{d}(\bar{x},\bar{z}),\mathbf{d}(\bar{z},\bar{y})\Big)\\
  &\le&\mathbf{d}(\bar{x},\bar{z})+\mathbf{d}(\bar{z},\bar{y})
\end{eqnarray*}
\end{proof}
The following theorem show that why we could use the metric $d$ to prove the compactness theorem. Verily, the interpretation of all logical connectives in metrically semantic are continuous functions with respect to the topology induced by metric $d$ on $[0,1]$ and $[0,1]^2$.
\begin{theorem}\label{con}
Assume that $S$, $\dotto$, $d$ and $\mathbf{d}$ be as in the Lemma \ref{metricd}. Then
$S:([0,1]^2,\mathbf{d})\to([0,1],d)$ and $\dotto:([0,1]^2,\mathbf{d})\to([0,1],d)$ are continuous functions.
\end{theorem}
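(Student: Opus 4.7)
The plan is to establish the stronger statement that both $S$ and $\dotto$ are $1$-Lipschitz as maps $([0,1]^2,\mathbf{d})\to([0,1],d)$, which immediately implies continuity. Concretely, I aim to prove
\[
d\bigl(S(x_1,x_2),S(y_1,y_2)\bigr)\le\mathbf{d}\bigl((x_1,x_2),(y_1,y_2)\bigr)
\quad\text{and}\quad
d\bigl(x_1\dotto x_2,y_1\dotto y_2\bigr)\le\mathbf{d}\bigl((x_1,x_2),(y_1,y_2)\bigr).
\]
By the definition of $d$ in Theorem~\ref{metric}, $d(a,b)$ equals whichever of $a\dotto b$ and $b\dotto a$ is nonzero, so in each case it suffices to bound both residua of the two outputs by the right-hand side; I describe only one of the two symmetric cases below. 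I will also repeatedly use the elementary observation that $u\dotto w\le d(u,w)$ for every $u,w\in[0,1]$, which is immediate from the definition of $d$.

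For $S$, Lemma~\ref{fa5} is tailor-made: applied in the case $S(x_1,x_2)\le S(y_1,y_2)$ it gives
\[
S(x_1,x_2)\dotto S(y_1,y_2)\le S(x_1\dotto y_1,\,x_2\dotto y_2),
\]
and monotonicity of $S$ together with $x_i\dotto y_i\le d(x_i,y_i)$ upgrades the right-hand side to $S\bigl(d(x_1,y_1),d(x_2,y_2)\bigr)=\mathbf{d}((x_1,x_2),(y_1,y_2))$. The reverse case is handled by exchanging the roles of the $x_i$ and $y_i$.

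For $\dotto$ the estimate cannot be obtained from a single Lipschitz-type identity, since the residuum is order-reversing in its first argument. The remedy is to apply the triangle-like inequality of Theorem~\ref{metric} twice, routing the bound through $x_1$ and $x_2$ as intermediate points. First, Theorem~\ref{metric} with $z=x_1$ gives $y_1\dotto y_2\le S(y_1\dotto x_1,\,x_1\dotto y_2)$; a second application with $z=x_2$ to the inner term $x_1\dotto y_2$, followed by monotonicity, associativity and commutativity of $S$, yields
\[
y_1\dotto y_2\le S\bigl(x_1\dotto x_2,\,S(y_1\dotto x_1,\,x_2\dotto y_2)\bigr).
\]
Adjointness of $S$ and $\dotto$ then translates this into
\[
(x_1\dotto x_2)\dotto(y_1\dotto y_2)\le S(y_1\dotto x_1,\,x_2\dotto y_2)\le S\bigl(d(x_1,y_1),d(x_2,y_2)\bigr)=\mathbf{d}\bigl((x_1,x_2),(y_1,y_2)\bigr),
\]
and swapping the roles of the $x_i$ and $y_i$ controls $(y_1\dotto y_2)\dotto(x_1\dotto x_2)$ by the same expression.

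The only nontrivial step is the $\dotto$ estimate, and it is here that I expect the main obstacle: because $\dotto$ is not coordinatewise monotone there is no direct analog of Lemma~\ref{fa5} for the residuum, so the bound must be obtained indirectly via the triangle-like identity of Theorem~\ref{metric} combined with the adjointness between $S$ and $\dotto$. Once that routing is set up, each remaining step reduces to a property already collected in Lemma~\ref{usefullem} and Theorem~\ref{metric}.
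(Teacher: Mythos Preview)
Your proposal is correct and follows essentially the same route as the paper: both arguments prove the $1$-Lipschitz estimate, use Lemma~\ref{fa5} directly for $S$, and for $\dotto$ route $y_1\dotto y_2$ through the intermediate points $x_1$ and $x_2$ via two applications of the triangle-type inequality, then invoke adjointness to extract the desired bound. The only cosmetic difference is that you cite the inequality $x\dotto y\le S(x\dotto z,z\dotto y)$ as the statement of Theorem~\ref{metric}, whereas the paper re-derives it inline each time from Lemma~\ref{fa2} plus adjointness; this makes your write-up slightly more streamlined but is not a different idea.
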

\begin{proof}
Let $\bar{x},\bar{y}\in[0,1]^2$ and assume that $S(x_1,x_2)\ge S(y_1,y_2)$. By using Lemma \ref{fa5} we have
\begin{eqnarray*}
  d\big(S(x_1,x_2), S(y_1,y_2)\big)&=&S(x_1,x_2)\dotto S(y_1,y_2)\\
  &\le& S(x_1\dotto y_1, x_2\dotto y_2)\\
  &\le& S\big(d(x_1,y_1),d(x_2,y_2)\big)\\
  &=&\mathbf{d}(\bar{x},\bar{y}).
\end{eqnarray*}
If $S(x_1,x_2)< S(y_1,y_2)$ a similar argument show that $d\big(S(x_1,x_2), S(y_1,y_2)\big)\le\mathbf{d}(\bar{x},\bar{y})$. Thus $S$ is a uniformly continuous function. For continuity of $\dotto$ by Lemma \ref{fa2}
\begin{center}
$x_1\dotto y_1\ge (y_1\dotto y_2)\dotto (x_1\dotto y_2)$,
\end{center}
which is alongside the adjointness of $S$ and $\dotto$ implies that
\begin{equation}\label{z1}
S\big((x_1\dotto y_1),(y_1\dotto y_2)\big)\ge x_1\dotto y_2.
\end{equation}
Again using Lemma \ref{fa2} we get $x_1\dotto y_2\ge (y_2\dotto x_2)\dotto (x_1\dotto x_2)$. Now, Beside inequality \ref{z1} we have
\begin{center}
$S\big((x_1\dotto y_1),(y_1\dotto y_2)\big)\ge (y_2\dotto x_2)\dotto (x_1\dotto x_2).$
\end{center}
Once more, since $\dotto$ is the residua of $S$ we have
\begin{center}
$S\Big(S\big((x_1\dotto y_1),(y_1\dotto y_2)\big),(y_2\dotto x_2)\Big)\ge(x_1\dotto x_2).$
\end{center}
Now, due to the commutativity and associativity of $S$ we get
\begin{center}
$S\Big(S\big((x_1\dotto y_1),(y_2\dotto x_2)\big),(y_1\dotto y_2)\Big)\ge(x_1\dotto x_2).$
\end{center}
Once again, adjointness of $S$ and $\dotto$ gives
\begin{eqnarray*}
  (y_1\dotto y_2)\dotto(x_1\dotto x_2) &\le&S\big((x_1\dotto y_1),(y_2\dotto x_2)\big)\\
  &\le&S\big(d(x_1,y_1),d(x_2,y_2)\big)\\
  &=&\mathbf{d}(\bar{x},\bar{y}),
\end{eqnarray*}
A similar argument show that
$(x_1\dotto x_2)\dotto(y_1\dotto y_2)\le\mathbf{d}(\bar{x},\bar{y})$. Whence
\begin{center}
$d\big((x_1\dotto x_2),(y_1\dotto y_2)\big)\le\mathbf{d}(\bar{x},\bar{y}),$
\end{center}
which completes the proof.
\end{proof}
\section{Compactness and $K$-Compactness in Basic Logic: New Results}
In this section using the continuity of logical connectives with respect to the metric introduced in Lemma
\ref{metricd}, we prove some versions of the compactness for fuzzy logics. In the rest of this section, whenever we deal with satisfiability, we mean satisfiability in metric semantic.
\subsection{propositional basic logic}\hfill\\
In the propositional case, the compactness in general could be prove as in the propositional \lo logic \cite{but1995,cintnav2004}.
\begin{theorem}\label{kcompfuzzy}
Let $S$, $\dotto$, and $d$ be as in the Lemma \ref{metricd} and furthermore assume that
$K$ be a compact subset of $([0,1],d)$. Then in metric semantic, every finitely $K$-satisfiable theory over BL is $K$-satisfiable.
\end{theorem}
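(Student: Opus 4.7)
The plan is to run a Tychonoff/finite-intersection-property argument exactly parallel to the one used for the propositional \lo logic in \cite{but1995, cintnav2004}, with the Euclidean metric on $[0,1]$ replaced throughout by the metric $d$ of Theorem \ref{metric}. Identify evaluations with elements of $[0,1]^P$ via $v \leftrightarrow (v(p))_{p \in P}$, equip each factor $[0,1]$ with the topology induced by $d$, and give $[0,1]^P$ the corresponding product topology. For each $\varphi \in \Sigma$ set
\begin{center}
$E_\varphi := \{v \in [0,1]^P : v(\varphi) \in K\}$.
\end{center}
A $K$-model of $\Sigma$ is exactly an element of $\bigcap_{\varphi \in \Sigma} E_\varphi$, so the goal is to show this intersection is nonempty.

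The first ingredient is the continuity of the evaluation map $\mathrm{ev}_\varphi : v \mapsto v(\varphi)$ from $[0,1]^P$ to $([0,1], d)$, which I would prove by induction on the complexity of $\varphi$. For atomic $\varphi$, $\mathrm{ev}_\varphi$ is a projection, and for $\varphi = \bot$ it is constant. For a compound $\varphi = \psi_1 \& \psi_2$, the map $v \mapsto (\mathrm{ev}_{\psi_1}(v), \mathrm{ev}_{\psi_2}(v))$ is continuous into $([0,1]^2, \mathbf{d})$ by the inductive hypothesis together with the fact that $\mathbf{d}$ induces the product topology on $[0,1]^2$ (this is the content of Lemma \ref{metricd}), and composing with $S : ([0,1]^2, \mathbf{d}) \to ([0,1], d)$ from Theorem \ref{con} yields continuity of $\mathrm{ev}_\varphi$. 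The case $\varphi = \psi_1 \to \psi_2$ is identical, using the continuity of the residuum also furnished by Theorem \ref{con}. Since $K$ is $d$-compact it is $d$-closed, so each $E_\varphi = \mathrm{ev}_\varphi^{-1}(K)$ is closed in $[0,1]^P$, and the finite $K$-satisfiability hypothesis translates directly into the finite intersection property for $\{E_\varphi\}_{\varphi \in \Sigma}$.

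The remaining step is to invoke compactness of the ambient space $[0,1]^P$ in order to convert FIP for the closed family $\{E_\varphi\}$ into nonemptiness of the full intersection. Here Tychonoff's theorem applied to the $d$-topology on each factor does the job whenever $([0,1], d)$ is itself compact, which is exactly what happens in the \lo case (where $d = d_L$ is the Euclidean metric) and reproduces the classical proof. I expect this final step to be the main obstacle for the general case, since for instance under $d_G$ every point of $(0,1]$ is $d_G$-isolated and so $([0,1], d_G)$ is not compact; one would then need either to carry out the FIP argument inside a suitably chosen compact subspace of evaluations whose values on the atomic propositions are constrained to a $d$-compact subset of $[0,1]$ depending on $K$ and the connectives, or to bypass topological compactness via a syntactic Lindenbaum-style extension of $\Sigma$ to a maximal finitely $K$-satisfiable theory from which an evaluation can be read off directly. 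Either refinement preserves the shape of the argument; the delicate point is exhibiting a compact arena in which the closed sets $E_\varphi$ live when $([0,1], d)$ itself fails to be compact.
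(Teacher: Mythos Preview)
Your approach is essentially identical to the paper's: identify evaluations with points of $[0,1]^P$, use Theorem~\ref{con} to see that each map $v\mapsto v(\varphi)$ is continuous into $([0,1],d)$, and run a finite-intersection-property argument on the preimages of $K$. The only divergence is in the final step. You want compactness of the ambient space $[0,1]^P$ together with closedness of the $E_\varphi$, and you correctly flag that $([0,1],d)$ need not be compact (it is not, for $d_G$), so Tychonoff does not apply directly. The paper, by contrast, simply asserts that each $\hat\varphi^{-1}(K)$ is itself a \emph{compact} subset of $[0,1]^I$ and then invokes the FIP for families of compact sets, which needs no ambient compactness.

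The paper offers no justification for that compactness claim, and your worry exposes it as unjustified in general. In G\"odel metric semantics with $K=\{0\}$ (certainly $d_G$-compact) and a single atomic proposition $p\in P$, one has $\hat p^{-1}(\{0\})=\{v:v(p)=0\}\cong\{0\}\times[0,1]^{P\setminus\{p\}}$, which is not compact in the product $d_G$-topology as soon as $|P|>1$, since every point of $(0,1]$ is $d_G$-isolated. So the obstacle you isolate is genuine: the paper's proof, read literally, does not close the gap you describe, and any complete argument will need one of the refinements you sketch (working inside a suitably chosen compact arena of evaluations, or replacing the topological step by a syntactic one). You have not lost anything relative to the paper by being cautious here.
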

\begin{proof}
Let $P$ and $Prop$ be as in the Definition \ref{bllogic}. Since every assignment $v_0:P\to [0,1]$ determine a unique evaluation $v:Prop \to [0,1]$, So $[0,1]^I$ determine the set of all evaluations.
Now as by Theorem \ref{con}, logical connectives are interpreted by continuous functions,
each $\varphi\in Prop$ can be identified by a continuous function $\hat{\varphi}:[0,1]^I\to[0,1]$ defined by
$\hat{\varphi}(v)=v(\varphi)$.

Now, assume that $\Sigma$ be a finitely $K$-satisfiable theory. Thus, for each finite subset $\Sigma_0$ of $\Sigma$, $\bigcap_{\varphi\in\Sigma_0}\hat{\varphi}^{-1}(K)\ne\emptyset$. But, for each $\varphi\in\Sigma$, $\hat{\varphi}^{-1}(K)$ is a compact subset of $[0,1]^I$ and so is $\bigcap_{\varphi\in\Sigma_0}\hat{\varphi}^{-1}(K)$. Now,
finite intersection property of compact sets implies that, $\bigcap_{\varphi\in\Sigma}\hat{\varphi}^{-1}(K)\ne\emptyset$, that is $\Sigma$ is $K$-satisfiable.
\end{proof}

By Theorem \ref{nonkcomplo} in the case of \lo logic as well as it's dual, for any noncompact subset $K$ of $([0,1],d_L)$, $K$-compactness fails in \lo logic. However, for arbitrarily continuous t-conorm based fuzzy logics, this is not hold. Indeed the expressive power of the language of logic, imposes some limitations in the results.

In the case of \g logic and product logic, this limitation is stated in Fact \ref{kcompgodelprop}. The translation of this Fact in metric semantic is as follows.
\begin{theorem}\label{01comgodel}
Assume that $K\subseteq[0,1)$ containing $0$. In metric semantic, {\bf G} as well as $\mathbf{\Pi}$ is $K$-compact.
\end{theorem}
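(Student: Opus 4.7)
My plan is to adapt the double-negation reduction that proves Fact \ref{kcompgodelprop} to the metric semantic. The decisive observation is that in both $\mathbf{G}$ and $\mathbf{\Pi}$ under metric semantic, the double-negation connective is a Boolean ``classicalization'' operator, and $\{0,1\}$-valued evaluations obey classical propositional logic (reading $0$ as true and $1$ as false), so everything reduces to the classical propositional compactness theorem.

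First I would compute $v(\neg\varphi) = v(\varphi) \dotto 1$: from Table~\ref{tconorms}, both $\dotto_G$ and $\dotto_\pi$ give $v(\neg\varphi) = 0$ when $v(\varphi) = 1$ and $v(\neg\varphi) = 1$ otherwise. Iterating, $v(\neg\neg\varphi) \in \{0,1\}$ with $v(\neg\neg\varphi) = 0$ iff $v(\varphi) < 1$. Now given a finitely $K$-satisfiable theory $\Sigma$ with $K \subseteq [0,1)$ and $0 \in K$, set $\Sigma_c := \{\neg\neg\varphi : \varphi \in \Sigma\}$. For any finite $\Sigma_0 \subseteq \Sigma$ and any $K$-model $v$ of $\Sigma_0$, $v(\varphi) < 1$ forces $v(\neg\neg\varphi) = 0$. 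Define the collapsed evaluation $v' : P \to \{0,1\}$ by $v'(p) = 0$ if $v(p) < 1$, else $v'(p) = 1$.

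The central lemma to prove is: for every propositional formula $\psi$, $v(\psi) < 1$ iff $v'(\psi) < 1$. I would prove this by induction on $\psi$ using Table~\ref{tconorms}: $\bot$ and atomic cases are immediate; for $\&$, in both $\mathbf{G}$ and $\mathbf{\Pi}$ one has $S(x,y) < 1$ iff $x < 1$ and $y < 1$; for $\to$, a four-way case split on whether each of $v(\varphi), v(\psi)$ equals $1$ matches the classical outcome. Thus $v'(\neg\neg\varphi) = 0$ for each $\varphi \in \Sigma_0$, and since $v'$ is $\{0,1\}$-valued its metric-semantic interpretation of $\&, \to, \bot$ coincides with classical AND, implication, and falsum; so $\Sigma_c$ is finitely satisfiable as a classical propositional theory.

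Finally I would apply the classical compactness theorem, valid on $\{0,1\}^P$ by Tychonoff for any cardinality of $P$, to obtain a $\{0,1\}$-valued model $v^*$ of $\Sigma_c$. Viewing $v^*$ as a $\mathbf{G}$ (resp.\ $\mathbf{\Pi}$) evaluation, every subformula takes values in $\{0,1\}$, so $v^*(\neg\neg\varphi) = 0$ forces $v^*(\varphi) = 0 \in K$, giving a $K$-model of $\Sigma$. The main obstacle I anticipate is the inductive case for $\to$, since $\dotto$ is piecewise-defined; but the case analysis using the explicit formulae of $\dotto_G$ and $\dotto_\pi$ is routine and in each of the four sub-cases reproduces the classical truth table of implication.
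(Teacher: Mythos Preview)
Your proposal is correct and follows essentially the same approach as the paper. The paper does not give a separate proof of this theorem; it presents the statement as the metric-semantic translation of Fact~\ref{kcompgodelprop}, which (as the paper notes) is proved in \cite{cintnav2004} via the double-negation reduction to classical propositional compactness---precisely the argument you spell out directly in the metric semantic.
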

By Theorem \ref{01comgodel} for example if we set $K=[0,1)$, then {\bf G} is $K$-compact but $K$ is not a compact subset of $([0,1],d_G)$. However, in Example \ref{counterexamplegodelp} we state a weak version of necessity condition for the $K$-compactness of {\bf G}. The translation of this example to the metric semantic is as follows.
\begin{example}\label{counterexamplegodelpmetric}
Assume that $K$ be an infinite subset of $(0,1]$ and $T=\{(p_i\to p_j\}_{i\le j, i,j\in \omega_2}$. Then in metric semantic, $T$ is finitely $K$-satisfiable but it is not $K$-satisfiable.
\end{example}
Note that infinite subset of $(0,1]$ are not compact in $([0,1],d_G)$. Indeed the only compact subsets of $([0,1],d_G)$ are finite subsets or countably infinite subsets which contains $0$ as the only limit point with respect to the order topology. However, for $K=[0,1]$ we have neither a proof nor a counter-example for $K$-compactness of propositional \g logic with respect to arbitrary set of atomic propositions. Hence, the following corollary summarizes the results of this section for propositional \g logic.
\begin{corollary}
Assume that $K\subsetneqq[0,1]$. In metric semantic, the propositional \g logic admit $K$-compactness if and only if $K$ is either a compact subset of $([0,1],d_G)$ or contains $0$ but does not contain $1$.
\end{corollary}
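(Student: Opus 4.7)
The plan is to prove the iff by combining the sufficient conditions of Theorems \ref{kcompfuzzy} and \ref{01comgodel} with the counter-example of Example \ref{counterexamplegodelpmetric}, together with the explicit description of $d_G$-compact subsets of $[0,1]$ recorded just before the statement.

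For sufficiency, each disjunct is handled directly. If $K$ is a compact subset of $([0,1], d_G)$, Theorem \ref{kcompfuzzy} applied to the G\"odel t-conorm $S_G$ (which is weaker than $S_L$, so that the metric of Theorem \ref{metric} coincides with $d_G$) immediately yields $K$-compactness of {\bf G} in the metric semantic. If instead $0 \in K$ and $1 \notin K$, then $K \subseteq [0, 1)$ contains $0$, and Theorem \ref{01comgodel} applies directly.

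For necessity, I argue by contrapositive: assume $K$ is not compact in $d_G$ and that the condition ``$0 \in K$ and $1 \notin K$'' fails, so either $0 \notin K$ or $1 \in K$. Recall that the compact subsets of $([0,1], d_G)$ are precisely the finite sets and the countable sets having $0$ as their unique order-accumulation point; in particular, every compact subset not containing $0$ is finite. Hence, if $0 \notin K$, the failure of $d_G$-compactness forces $K$ to be infinite, so $K$ is an infinite subset of $(0,1]$, and Example \ref{counterexamplegodelpmetric} exhibits a finitely $K$-satisfiable theory over {\bf G} that fails to be $K$-satisfiable.

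The main obstacle is the remaining subcase $1 \in K$ with $K$ not $d_G$-compact (and, without loss of generality, $0 \in K$ too, since otherwise the previous paragraph applies). Here I would adapt Example \ref{counterexamplegodelpmetric} by using an index set of cardinality exceeding $|K \cap [\varepsilon,1]|$ for some $\varepsilon > 0$, and augmenting the chain $\{p_i \to p_j\}_{i<j}$ with auxiliary formulas designed to preclude the trivial valuations, in particular the constant valuation $v \equiv 0$, which would otherwise make every formula take a value in $\{0,1\} \subseteq K$. The delicate point is that the basic G\"odel connectives only produce outputs from $\{0, 1\} \cup \{v(p_i) : i \in I\}$, which severely limits how finely one can prescribe admissible valuations inside {\bf G}, and devising the needed auxiliary formulas is the most technically delicate step.
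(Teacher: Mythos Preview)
Your sufficiency argument and your treatment of the case $0\notin K$ in the necessity direction are exactly what the paper intends: the corollary is stated there without proof, merely as a ``summary'' of Theorems~\ref{kcompfuzzy} and~\ref{01comgodel} together with Example~\ref{counterexamplegodelpmetric}, and those are precisely the ingredients you invoke.

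The genuine gap is in your ``main obstacle'' subcase $0\in K$, $1\in K$, $K$ not $d_G$-compact. Your own parenthetical observation is fatal to the plan: under the constant valuation $v\equiv 0$ every G\"odel formula evaluates into $\{0,1\}$, and since $\{0,1\}\subseteq K$, this valuation is a $K$-model of \emph{every} theory. No auxiliary formula built from the G\"odel connectives can take a value outside $\{0,1\}$ under this $v$, so nothing you add to the chain $\{p_i\to p_j\}$ can preclude it. Consequently $K$-compactness holds trivially whenever $\{0,1\}\subseteq K$, and for instance $K=\{0\}\cup[1/2,1]$ is a proper subset of $[0,1]$ that is neither $d_G$-compact nor of the form ``contains $0$ but not $1$'', yet for which $K$-compactness does hold. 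This shows that the necessity direction, as literally stated, is false in this subcase; the paper's preceding results do not cover it either, and no adaptation of Example~\ref{counterexamplegodelpmetric} can produce the required counterexample. What the cited results genuinely establish is the characterization restricted to $K$ with $0\notin K$ or $1\notin K$; the remaining case $\{0,1\}\subseteq K\subsetneqq[0,1]$ always yields $K$-compactness for the trivial reason above, so the corollary needs an additional disjunct rather than the argument you sketch.
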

In the case of product logic, one could easily verify that the open balls with center $a$ and radius $r$ in $([0,1],d_\pi)$ are as follows:
\begin{itemize}
  \item[] $r<a<1$ : $N_r(a)=((a-r)/(1-r),a+r-ar)$,
  \item[] $a<r\le 1$ : $N_r(a)=[0,a+r-ar)$,
  \item[] $a=1$ : $N_r(a)=\{1\}$.
\end{itemize}
Now, an easy argument shows $K=[0,1)$ is not a compact subset of $([0,1],d_\pi)$, but Theorem \ref{01comgodel} says that the product logic is $K$-compact.

On the other hand, there is no characterization for product logic propositions like as the  McNaughton's characterization for \lo propositions. So, we could not state a suitable condition for the necessity condition of the $K$-compactness in product logic like as the one in \lo lgoic (Fact \ref{nonkcomplo}).
\subsection{first-order basic logic}\hfill\\
There are several approaches to prove the compactness of the firs-order \lo logic. \cite{chang63} apply the concept of proof and consistency and then using the continuity of interpretation of logical connectives, show that consistency and satisfiability are equivalent concepts. \cite{pav79} add some nullary connectives and again using the continuity of interpretation of logical connectives show that truth degree of any sentence is equal to its provability degree. \cite{tpd2011} use the ultraproduct method which again used the continuity of interpretation of logical connectives.
We use the ultraproduct method to proof a

sufficiency condition for the $K$-compactness of {\L}$\forall$ for arbitrary compact subset $K$ of $[0,1]$, and it is proved in \cite{tpd2011}.
To use the ultraproduct method, lets remind some facts about filters on topological spaces.
\begin{enumerate}[label=\textbf{(Fact\arabic*)},ref=(Fact\arabic*),leftmargin=1.5cm]
\item\label{f1} A filter $\mathfrak{D}$ on a topological space $X$, convergent to
an element $x\in X$, whenever for each open set $U$ containing $x$, $U$ is an element of $\mathfrak{D}$.
This is denoted by $\mathfrak{D}\to x$ and $x$ is called a limit point of $\mathfrak{D}$.
\item\label{f2} $X$ is a compact Hausdorff space if and only if every filter
$\mathcal{D}$ on $X$ has a unique limit point.
\item\label{f3} Let $f:X\to Y$ be a continuous function at $x_0\in X$ and $\mathfrak{D}$ be a filter on $X$.
If $f(\mathfrak{D})$ be the filter on $Y$ generated by the set $\{f(A): A\in\mathfrak{D}\}$,
then $f(\mathfrak{D})\to f(x_0)$.
\end{enumerate}
\begin{defn}
Let $X$ be a topological space, $I$ be a nonempty set, and $\mathfrak{D}$ be a
filter on $I$. Furthermore, let $f\in I^X$,  $\{x_i\}_{i\in I}$ be the range of $f$, and
$f^*(\mathfrak{D})=\{A\subseteq X: f^{-1}(A)\in\mathfrak{D}\}$. If $f^*(\mathfrak{D})$
is convergent to $x\in X$, then we call
$x$ the $\mathfrak{D}$-limit of the family $\{x_i\}_{i\in I}$ and write
$\lim_\mathfrak{D}x_i=x$.
\end{defn}
Another version of \ref{f3}, is the following.
\begin{corollary}\label{limit}
Let $f:X\to Y$ be a continuous function at $x_0\in X$, $I$ be a
nonempty set, and $\mathfrak{D}$ be a filter on $I$. If $\lim_\mathfrak{D}x_i=x_0$
then $\lim_\mathfrak{D}f(x_i)=f(x_0)$.
\end{corollary}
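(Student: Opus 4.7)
The plan is to unpack both sides of the conclusion using the definition of $\mathfrak{D}$-limit and then invoke continuity directly, without having to compare the filters $f(g^*(\mathfrak{D}))$ and $h^*(\mathfrak{D})$ produced by \ref{f3}. Concretely, let $g:I\to X$ denote the indexing map $g(i)=x_i$, and set $h=f\circ g$, so that $h(i)=f(x_i)$. Then the hypothesis $\lim_\mathfrak{D}x_i=x_0$ unpacks as the statement that the filter $g^*(\mathfrak{D})=\{A\subseteq X:g^{-1}(A)\in\mathfrak{D}\}$ converges to $x_0$, which by \ref{f1} means: every open neighborhood $U$ of $x_0$ satisfies $g^{-1}(U)\in\mathfrak{D}$.

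What I need to prove is the corresponding statement for $h$: every open neighborhood $V$ of $f(x_0)$ in $Y$ satisfies $h^{-1}(V)\in\mathfrak{D}$. The mechanism linking these two is simply continuity of $f$ at $x_0$ together with upward closure of the filter $\mathfrak{D}$.

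So the proof would go as follows. Fix an open set $V\subseteq Y$ with $f(x_0)\in V$. Continuity of $f$ at $x_0$ furnishes an open set $U\subseteq X$ with $x_0\in U$ and $f(U)\subseteq V$; equivalently $U\subseteq f^{-1}(V)$. Applying the pre-image $g^{-1}$ to this inclusion yields
\[
g^{-1}(U)\;\subseteq\;g^{-1}(f^{-1}(V))\;=\;(f\circ g)^{-1}(V)\;=\;h^{-1}(V).
\]
By the hypothesis on $g^*(\mathfrak{D})$, we have $g^{-1}(U)\in\mathfrak{D}$, and since a filter is closed under taking supersets, $h^{-1}(V)\in\mathfrak{D}$. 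As $V$ was an arbitrary open neighborhood of $f(x_0)$, this shows $h^*(\mathfrak{D})\to f(x_0)$, that is, $\lim_\mathfrak{D}f(x_i)=f(x_0)$.

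There is no real obstacle here; the corollary is essentially a translation of \ref{f3} into the language of $\mathfrak{D}$-limits of indexed families, and the one-line commutation $g^{-1}\circ f^{-1}=(f\circ g)^{-1}$ is what makes the bookkeeping trivial. The only mild care needed is to make sure the pre-image filter $g^*(\mathfrak{D})$ is being used in the direction stated (so that membership of $U$ in $g^*(\mathfrak{D})$ really does mean $g^{-1}(U)\in\mathfrak{D}$), which is just the definition preceding the corollary.
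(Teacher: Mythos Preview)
Your proof is correct. The paper takes the route you deliberately avoided: it invokes \ref{f3} to get $f(\alpha^*(\mathfrak{D}))\to f(x_0)$ (writing $\alpha$ for your $g$), and then establishes the filter inclusion $f(\alpha^*(\mathfrak{D}))\subseteq (f\circ\alpha)^*(\mathfrak{D})$ so that the larger filter inherits the limit. Unwinding that inclusion argument, however, one lands on exactly your computation $\alpha^{-1}(A)\subseteq\alpha^{-1}(f^{-1}(B))$ together with upward closure of $\mathfrak{D}$. So the two arguments share the same core step; you simply bypass the intermediate pushforward filter $f(g^*(\mathfrak{D}))$ and work directly with neighborhoods, which is marginally cleaner and makes the dependence on continuity at the single point $x_0$ more transparent. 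The paper's version has the small advantage of isolating \ref{f3} as a reusable black box, but for this corollary nothing is gained by the detour.
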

\begin{proof}
Assume that $\{x_i\}_{i\in I}$ be the range of the function $\alpha\in X^I$. So,
$\alpha^*(\mathfrak{D})\to x_0$ and by \ref{f3}
$f\big(\alpha^*(\mathfrak{D})\big)\to f(x_0)$. Now, if we show that
$f\big(\alpha^*(\mathfrak{D})\big)\subseteq(\alpha \circ f)^*(\mathfrak{D})$, then
we have $(\alpha \circ f)^*(\mathfrak{D})\to f(x_0)$ which fulfills the proof.

Let $B\in f\big(\alpha^*(\mathfrak{D})\big)$. So, there
exists $A\in\alpha^*(\mathfrak{D})$ such that $f(A)\subseteq B$. Hence,
$A\subseteq f^{-1}(B)$ and therefore $\alpha^{-1}(A)\subseteq \alpha^{-1}\big(f^{-1}(B)\big)$.
But $A\in\alpha^*(\mathfrak{D})$ and so $\alpha^{-1}(A)\in\mathfrak{D}$ which implies
that $\alpha^{-1}\big(f^{-1}(B)\big)\in\mathfrak{D}$.
Whence, $B\in (\alpha \circ f)^*(\mathfrak{D})$.
\end{proof}
\begin{lemma}\label{order}
Let $V$ be a \g set, $I$ be a nonempty set, and $\mathfrak{D}$ be a filter on $I$.
Consider $(V,d_{max})$ as a topological space.
If $\{x_i\}_{i\in I}$ and $\{y_i\}_{i\in I}$ are two family of elements of $V$, then
\begin{center}
$\lim_\mathfrak{D}x_i\le\lim_\mathfrak{D}y_i$ if and only if $\{i :x_i\le y_i\}\in\mathfrak{D}$.
\end{center}
\end{lemma}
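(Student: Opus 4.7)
The plan is to reformulate $\lim_\mathfrak{D} x_i = x$ as a set-theoretic membership statement in $\mathfrak{D}$ and then run a case analysis on the values $x = \lim_\mathfrak{D} x_i$ and $y = \lim_\mathfrak{D} y_i$. The underlying topological observation is that in $(V, d_{max})$ every point $v > 0$ is isolated: for $w \ne v$ one has $d_{max}(v,w) = \max\{v,w\} \ge v$, so the open $d_{max}$-ball of radius $v$ around $v$ collapses to $\{v\}$. The only point that can fail to be isolated is $0$, whose basic open neighborhoods are of the form $[0,r) \cap V$ for $r > 0$. Consequently, $\lim_\mathfrak{D} x_i = x > 0$ is equivalent to $\{i : x_i = x\} \in \mathfrak{D}$, while $\lim_\mathfrak{D} x_i = 0$ is equivalent to $\{i : x_i < r\} \in \mathfrak{D}$ holding for every $r > 0$.

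For the $(\Leftarrow)$ direction I would argue by contraposition: assuming $\lim x > \lim y$, I produce a set in $\mathfrak{D}$ on which $x_i > y_i$ pointwise, which forces $\{i : x_i \le y_i\} \notin \mathfrak{D}$ since $\mathfrak{D}$ is a proper filter. Because $\lim x > \lim y \ge 0$, the value $\lim x$ is positive and hence isolated, so $\{i : x_i = \lim x\} \in \mathfrak{D}$; intersecting this with the appropriate witness set for $y$ (namely $\{i : y_i = \lim y\}$ if $\lim y > 0$, or $\{i : y_i < \lim x\}$ if $\lim y = 0$) gives the required set. For the $(\Rightarrow)$ direction, when $x < y$ the set $\{i : y_i = y\} \in \mathfrak{D}$ (as $y > 0$) can be intersected with $\{i : x_i = x\}$ if $x > 0$, or with $\{i : x_i < y\}$ if $x = 0$; the intersection lies inside $\{i : x_i \le y_i\}$. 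When $x = y > 0$, the intersection $\{i : x_i = x\} \cap \{i : y_i = y\} \in \mathfrak{D}$ forces $x_i = y_i$ pointwise.

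The principal obstacle is the residual case $x = y = 0$ of the $(\Rightarrow)$ direction: the limit characterization only supplies $\{i : x_i < r\}$ and $\{i : y_i < r\}$ in $\mathfrak{D}$ separately for each $r > 0$, and these do not obviously force a pointwise comparison between $x_i$ and $y_i$. I would resolve this case by invoking the specific definition of a G\"odel set $V$ given earlier in the paper, under which $0$ is isolated in $V$; this would upgrade the characterization at $0$ to the statements $\{i : x_i = 0\} \in \mathfrak{D}$ and $\{i : y_i = 0\} \in \mathfrak{D}$, whose intersection lies inside $\{i : x_i \le y_i\}$, completing the proof.
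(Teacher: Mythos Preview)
Your route is quite different from the paper's: rather than a case analysis on isolated points, the paper encodes the order via the residuum, observing that $a\le b$ iff $b\dotto a=0$, and then invokes the continuity of $\dotto$ (Theorem~\ref{con}) together with Corollary~\ref{limit} to pass $\mathfrak D$-limits through $\dotto$. This is shorter and avoids almost all of your case splitting.

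You are right to flag the case $x=y=0$ of the $(\Rightarrow)$ direction as the crux, but your proposed resolution does not work: the paper never defines ``G\"odel set'' in a way that forces $0$ to be isolated, and its principal example $V_\downarrow=\{1/n:n\in\mathbb N\}\cup\{0\}$ has $0$ as a genuine accumulation point in the $d_{max}$ topology (the ball $N_r(0)=[0,r)\cap V_\downarrow$ is infinite for every $r>0$). Worse, over this $V_\downarrow$ the $(\Rightarrow)$ direction is simply false: with $\mathfrak D$ a nonprincipal ultrafilter on $\mathbb N$, $x_i=1/i$ and $y_i=1/(i{+}1)$, both $\mathfrak D$-limits equal $0$ yet $\{i:x_i\le y_i\}=\varnothing\notin\mathfrak D$. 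The paper's own argument shares exactly this gap---it jumps from $\lim_{\mathfrak D}(y_i\dotto x_i)=0$ to $\{i:y_i\dotto x_i=0\}\in\mathfrak D$, which again presupposes that $0$ is isolated---so your diagnosis is sound even if your remedy is not. (The uses of the lemma in the {\L}o\'s theorem happen to avoid the configuration $x=y=0$, so the downstream results are unaffected, but the lemma in the stated generality is not established by either argument.)
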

\begin{proof}
Let $E=\{i:x_i\le y_i\}$. Assume that for each $i\in E$, $x_i\le y_i$. Thus,
for each $i\in E$, $y_i\dotto x_i=0$. Now, by continuity of $\dotto:(V^2,d_{Max})\to(V,d_{max})$
and using the Corollary \ref{limit} we get $y\dotto x=0$. Thus, $x\le y$.

Conversely, if $x\le y$, then $\lim_\mathfrak{D}y_i\dotto x_i=0$. So,
$\{i: x_i\le y_i\}\in\mathfrak{D}$.
\end{proof}
Now, assume that $V$ is a \g set which is a
compact Hausdorff subspace of $([0,1],d_{max})$. For example
assume that $V=V_\downarrow=\{\frac{1}{n}: n\in\mathbb{N}\}\cup\{0\}$.
Then by \ref{f2}, we could construct the ultraproduct of
a family of structures in the first-order \g logic $\mathfrak{G}_V$.
\begin{defn}
Let $\{\mathcal{M}_i\}_{i\in I}$ be a family of $\mathcal{L}$-structures and
$\mathfrak{D}$ be a filter on $I$. The $\mathfrak{D}$-ultraproduct of family
$\{\mathcal{M}_i\}_{i\in I}$ is an $\mathcal{L}$-structure $\mathcal{M}$ with
universe $M=\prod_{i\in I}M_i$ whose interpretation of elements
of $\mathcal{L}$ is defined as follows.
\begin{itemize}
\item For $n$-ary predicate symbol $R\in\mathcal{L}$, $R^\mathcal{M}:M^n\to V$ is
defined by
\begin{center}
$R^\mathcal{M}\big(\{x_i^1\}_{i\in I}, ..., \{x_i^n\}_{i\in I}\big)
=\lim_\mathfrak{D}R^{\mathcal{M}_i}(x_i^1, ..., x_i^n)$.
\end{center}
\item For $n$-ary function symbol $f\in\mathcal{L}$, $f^\mathcal{M}:M^n\to M$ is
defined by
\begin{center}
$f^\mathcal{M}\big(\{x_i^1\}_{i\in I}, ..., \{x_i^n\}_{i\in I}\big)
=\{f^{\mathcal{M}_i}(x_i^1, ..., x_i^n)\}_{i\in I}$.
\end{center}
\end{itemize}
\end{defn}
Obviously, by \ref{f2} the above definition is well-defined.
\begin{theorem}({\L}o$\acute{s}$ theorem)~Let $V$ be a \g set and $(V,d_{max})$ be a compact Hausdorff space.
Furthermore, assume that $\{\mathcal{M}_i\}_{i\in I}$ be a family of $\mathcal{L}$-structures.
If $\mathfrak{D}$ is an ultrafilter on $I$ and $\mathcal{M}$ is the
$\mathfrak{D}$-ultraproduct of family $\{\mathcal{M}_i\}_{i\in I}$, then in first-order \g logic $\mathfrak{G}_V$,
for each $\mathcal{L}$-formula $\varphi(x_1, ..., x_n)$ and each
$\mathbf{a}_k=\{a_i^k\}_{i\in I}\in M$ $(1\le k\le n)$,
\begin{center}
$\displaystyle\varphi^\mathcal{M}(\mathbf{a}_1, ..., \mathbf{a}_n)=\lim_\mathfrak{D}\varphi^{\mathcal{M}_i}(a_i^1, ..., a_i^n).$
\end{center}
\end{theorem}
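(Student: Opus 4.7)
The plan is to proceed by induction on the complexity of $\varphi$. For the base cases: if $\varphi \equiv \bot$, both sides equal the constant $1$ (in metric semantic) and the identity is trivial. If $\varphi$ is atomic, say $\varphi \equiv P(t_1, \ldots, t_k)$, the componentwise definition of function symbols in the ultraproduct gives $t_j^{\mathcal{M}}(\bar{\mathbf{a}}) = \{t_j^{\mathcal{M}_i}(\bar{a}_i)\}_{i \in I}$ by a short sub-induction on terms, and the claim reduces directly to the defining equation of $P^{\mathcal{M}}$ as a $\mathfrak{D}$-limit.

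The inductive steps for the propositional connectives are the payoff of having set up the metric semantic. By Theorem \ref{con}, both $S$ and $\dotto$ are continuous as maps $([0,1]^2, \mathbf{d}) \to ([0,1], d)$. Assuming inductively that $\lim_\mathfrak{D} \varphi^{\mathcal{M}_i}(\bar{a}_i) = \varphi^{\mathcal{M}}(\bar{\mathbf{a}})$ and similarly for $\psi$, Corollary \ref{limit} applied to $S$ (respectively $\dotto$) transports these convergences through the connective and produces the required identity for $\varphi \& \psi$ (respectively $\varphi \to \psi$).

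The quantifier case is the main obstacle. I treat $\varphi \equiv \forall y\, \psi(y, \bar{x})$; the existential case is dual. Write $\Psi_i = \sup_{b \in M_i} \psi^{\mathcal{M}_i}(b, \bar{a}_i)$ and $\Psi = \sup_{\mathbf{b} \in M} \psi^{\mathcal{M}}(\mathbf{b}, \bar{\mathbf{a}})$. The easy inequality $\Psi \le \lim_\mathfrak{D} \Psi_i$ comes from Lemma \ref{order}: for any $\mathbf{b} = \{b_i\}_{i \in I} \in M$, the pointwise bound $\psi^{\mathcal{M}_i}(b_i, \bar{a}_i) \le \Psi_i$ survives the $\mathfrak{D}$-limit, and the inductive hypothesis identifies the left-hand limit with $\psi^{\mathcal{M}}(\mathbf{b}, \bar{\mathbf{a}})$; taking the supremum over $\mathbf{b}$ closes this direction.

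The reverse inequality is the harder half, and I expect this to be where the real work lies. The strategy is to manufacture, for every $\epsilon > 0$, a single witness $\mathbf{b}^* = \{b_i^*\}_{i \in I} \in M$ that approximates the fiberwise supremum: for each $i \in I$, the definition of supremum in $V$ provides $b_i^* \in M_i$ with $d(\psi^{\mathcal{M}_i}(b_i^*, \bar{a}_i), \Psi_i) < \epsilon$. By the inductive hypothesis and Lemma \ref{order}, the resulting value $\psi^{\mathcal{M}}(\mathbf{b}^*, \bar{\mathbf{a}})$ lies within $\epsilon$ of $\lim_\mathfrak{D} \Psi_i$; letting $\epsilon \to 0$ via compactness of $(V, d_{max})$ then yields $\Psi \ge \lim_\mathfrak{D} \Psi_i$. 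The delicate point, particularly when the $\Psi_i$ are not attained in $V$, is precisely where the compactness hypothesis on $V$ becomes indispensable.
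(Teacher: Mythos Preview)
Your induction skeleton and the connective steps match the paper exactly: atomic formulas are handled by the definition of the ultraproduct, and the $\&$ / $\to$ cases go through Corollary~\ref{limit} and the continuity established in Theorem~\ref{con}. The easy half of the quantifier case is also the same as the paper's.

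The gap is in the reverse inequality for $\forall$. You write that ``the definition of supremum in $V$ provides $b_i^*\in M_i$ with $d(\psi^{\mathcal{M}_i}(b_i^*,\bar a_i),\Psi_i)<\epsilon$.'' For the G\"odel metric this is \emph{not} what the definition of supremum gives. Recall $d_G(x,y)=\max\{x,y\}$ when $x\neq y$; hence $d_G(x,\Psi_i)<\epsilon$ forces either $x=\Psi_i$ or \emph{both} $x,\Psi_i<\epsilon$. The order-theoretic approximation $\psi(b)>\Psi_i-\epsilon$ coming from the supremum does not imply $d_G$-closeness at all when $\Psi_i\geq\epsilon$. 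So your $\epsilon$-scheme, as written, does not get off the ground. What actually saves the situation is a structural fact about compact $(V,d_G)$ that you only gesture at: every nonzero point of such a $V$ is isolated (the paper notes that compact G\"odel sets have $0$ as their only limit point), so any positive supremum $\Psi_i$ is in fact a \emph{maximum} and an exact witness $b_i^*$ with $\psi^{\mathcal{M}_i}(b_i^*)=\Psi_i$ exists. Once you have that, no $\epsilon$ is needed.

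The paper exploits this directly and avoids the metric-approximation language altogether. Its argument is order-based: assume for contradiction that $\Psi<v\le L$ for some $v\in V$; by Lemma~\ref{order} the set $E=\{i:\,v\le\Psi_i\}$ lies in $\mathfrak{D}$; for each $i\in E$ pick $b_i$ with $\psi^{\mathcal{M}_i}(b_i)\ge v$ (this is where attainment of suprema is used, when $v=\Psi_i$); then Lemma~\ref{order} and the inductive hypothesis give $\psi^{\mathcal{M}}(\mathbf{b})\ge v$, contradicting $\Psi<v$. This threshold argument is both shorter and better adapted to the G\"odel setting than your $\epsilon$-approach, which is really tuned to the {\L}ukasiewicz metric. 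If you want to rescue your version, replace the appeal to ``the definition of supremum'' by the isolation argument above and drop the $\epsilon$.
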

\begin{proof}
The proof is by induction on formulas.
\begin{itemize}
\item Clearly, for every atomic formula, by definition of the $\mathfrak{D}$-ultraproduct of family
$\{\mathcal{M}_i\}_{i\in I}$,
\begin{center}
$R^\mathcal{M}\big(\mathbf{a}_1, ..., \mathbf{a}_n\big)
=\lim_\mathfrak{D}R^{\mathcal{M}_i}(a_i^1, ..., a_i^n).$
\end{center}
\item Let $\varphi(\bar{x})=\theta(\bar{x})\to\psi(\bar{x})$, where for each
$\mathbf{a}_k=\{a_i^k\}_{i\in I}\in M$ $(1\le k\le n)$,
\begin{center}
$\displaystyle\theta^\mathcal{M}(\mathbf{a}_1, ..., \mathbf{a}_n)=\lim_\mathfrak{D}\theta^{\mathcal{M}_i}(a_i^1, ..., a_i^n),~~~~
\psi^\mathcal{M}(\mathbf{a}_1, ..., \mathbf{a}_n)=\lim_\mathfrak{D}\psi^{\mathcal{M}_i}(a_i^1, ..., a_i^n).$
\end{center}
Assume that
\begin{center}
$V_0=\{\chi^\mathcal{M}(\bar{\mathbf{a}}):~\chi(\bar{x})~\text{is an}~\mathcal{L}\text{-formula and}~
\bar{\mathbf{a}}\subseteq M\}\cup
\{\chi^{\mathcal{M}_i}(\bar{a}):~\chi(\bar{x})~\text{is an}~\mathcal{L}\text{-formula and}~
\bar{a}\subseteq M_i\}_{i\in I}$.
\end{center}
As $\mathfrak{D}$ is an ultrafilter on $I$, since $\dotto$ is a continuous function by
Corollary \ref{limit},
\begin{eqnarray*}
\displaystyle\varphi^\mathcal{M}(\mathbf{a}_1, ..., \mathbf{a}_n)&=&
\theta^\mathcal{M}(\mathbf{a}_1, ..., \mathbf{a}_n)\dotto\psi^\mathcal{M}(\mathbf{a}_1, ..., \mathbf{a}_n)\\
&=&\lim_\mathfrak{D}\theta^{\mathcal{M}_i}(a_i^1, ..., a_i^n)\dotto
\lim_\mathfrak{D}\psi^{\mathcal{M}_i}(a_i^1, ..., a_i^n)\\
&=&\lim_\mathfrak{D}\big(\theta^{\mathcal{M}_i}(a_i^1, ..., a_i^n)\dotto\psi^{\mathcal{M}_i}(a_i^1, ..., a_i^n)\big)\\
&=&\lim_\mathfrak{D}\varphi^{\mathcal{M}_i}(a_i^1, ..., a_i^n)
\end{eqnarray*}
\item $\varphi(\bar{x})=\theta(\bar{x})\wedge\psi(\bar{x})$ is analogous to the previous item.
\item Let $\varphi(x_1, ..., x_n)=\forall y\,\psi(x_1, ..., x_n,y)$, where for each
$\mathbf{c}=\{c_i\}_{i\in I}\in M$ and
$\mathbf{a}_k=\{a_i^k\}_{i\in I}\in M$ $(1\le k\le n)$,
\begin{center}
$\psi^\mathcal{M}(\mathbf{a}_1, ..., \mathbf{a}_n,\mathbf{c})=\lim_\mathfrak{D}\psi^{\mathcal{M}_i}(a_i^1, ..., a_i^n,c_i).$
\end{center}
For each $i\in I$, $\mathbf{a}_1, ..., \mathbf{a}_n ,\mathbf{c}\in M$,
\begin{center}
$\psi^{\mathcal{M}_i}(a_i^1, ..., a_i^n,c_i)\le\sup_{c_i\in M_i}\psi^{\mathcal{M}_i}(a_i^1, ..., a_i^n,c_i)=\varphi^{\mathcal{M}_i}(a_i^1, ..., a_i^n)$.
\end{center}
Thus, by Lemma \ref{order},
\begin{center}
$\lim_\mathfrak{D}\psi^{\mathcal{M}_i}(a_i^1, ..., a_i^n,c_i)\le
\lim_\mathfrak{D}\varphi^{\mathcal{M}_i}(a_i^1, ..., a_i^n)$.
\end{center}
So,
\begin{eqnarray*}
\varphi^\mathcal{M}(\mathbf{a}_1, ..., \mathbf{a}_n)
&=&\sup_{\mathbf{c}\in M}\psi^\mathcal{M}(\mathbf{a}_1, ..., \mathbf{a}_n,\mathbf{c})\\
&=&\sup_{\mathbf{c}\in M}\lim_\mathfrak{D}\psi^{\mathcal{M}_i}(a_i^1, ..., a_i^n,c_i)\\
&\le&\lim_\mathfrak{D}\varphi^{\mathcal{M}_i}(a_i^1, ..., a_i^n).
\end{eqnarray*}
To prove the reverse inequality, we show that for each $v\in V$,
\begin{center}
if $\varphi^\mathcal{M}(\mathbf{a}_1, ..., \mathbf{a}_n)<v$ then
$\lim_\mathfrak{D}\varphi^{\mathcal{M}_i}(a_i^1, ..., a_i^n)<v$.
\end{center}
Suppose for the propose of contradiction
that $\varphi^\mathcal{M}(\mathbf{a}_1, ..., \mathbf{a}_n)<v$ but
$v\le\lim_\mathfrak{D}\varphi^{\mathcal{M}_i}(a_i^1, ..., a_i^n)$. Thus,
\begin{center}
$E=\{i: v\le\varphi^{\mathcal{M}_i}(a_i^1, ..., a_i^n)\}\in\mathfrak{D}$.
\end{center}
So, for each $i\in E$, $v\le\sup_{c_i\in M_i}\psi^{\mathcal{M}_i}(a_i^1, ..., a_i^n,c_i)$, which means
that for each $i\in E$ there is $b_i\in M_i$ such
that $v\le\psi^{\mathcal{M}_i}(a_i^1, ..., a_i^n,b_i)$. Consider some arbitrary $b_i\in M_i$ for $i\notin E$ and
let $\mathbf{b}=\{b_i\}_{i\in I}$. By Lemma \ref{order}, we have
\begin{eqnarray*}
v&\le&\lim_\mathfrak{D}\psi^{\mathcal{M}_i}(a_i^1, ..., a_i^n,b_i)\\
&=&\psi^\mathcal{M}(\mathbf{a}_1, ..., \mathbf{a}_n,\mathbf{b})\\
&\le&\sup_{\mathbf{c}\in M}\psi^\mathcal{M}(\mathbf{a}_1, ..., \mathbf{a}_n,\mathbf{c})\\
&=&\varphi^\mathcal{M}(\mathbf{a}_1, ..., \mathbf{a}_n),
\end{eqnarray*}
a contradiction.
\item $\varphi(x_1, ..., x_n)=\exists y\,\psi(x_1, ..., x_n,y)$, is similar to the previous item.
\end{itemize}
\end{proof}
\begin{theorem}(Compactness theorem)~Let $V$ be a \g set and $(V,d_{max})$ be a compact Hausdorff space.
In first-order \g logic $\mathfrak{G}_V$, every finitely satisfiable theory is satisfiable.
\end{theorem}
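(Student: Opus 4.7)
The plan is to transplant the classical ultraproduct proof of compactness to $\mathfrak{G}_V$, leveraging the Łoś theorem just established. Let $\Sigma$ be a finitely satisfiable theory. I would let $I$ denote the collection of all finite subsets of $\Sigma$, and, invoking finite satisfiability, pick for each $i\in I$ an $\mathcal{L}$-structure $\mathcal{M}_i$ with $\varphi^{\mathcal{M}_i}=0$ for every $\varphi\in i$ (recall that in the metric semantic, $\mathcal{M}\models\varphi$ means exactly $\varphi^{\mathcal{M}}=0$).

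Next, for each $\varphi\in\Sigma$ I would set $A_\varphi=\{i\in I:\varphi\in i\}$. Since $A_{\varphi_1}\cap\cdots\cap A_{\varphi_n}$ contains the element $\{\varphi_1,\ldots,\varphi_n\}$, the family $\{A_\varphi\}_{\varphi\in\Sigma}$ has the finite intersection property and therefore generates a proper filter on $I$, which I would extend (via Zorn) to an ultrafilter $\mathfrak{D}$. Because $(V,d_{max})$ is compact Hausdorff, the $\mathfrak{D}$-limits appearing in the definition of the ultraproduct all exist and are unique, so the $\mathfrak{D}$-ultraproduct $\mathcal{M}=\prod_{\mathfrak{D}}\mathcal{M}_i$ is a well-defined structure of $\mathfrak{G}_V$.

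Now, for each $\varphi\in\Sigma$, Łoś's theorem yields $\varphi^{\mathcal{M}}=\lim_{\mathfrak{D}}\varphi^{\mathcal{M}_i}$. Since $A_\varphi\in\mathfrak{D}$ and $\varphi^{\mathcal{M}_i}=0$ for every $i\in A_\varphi$, the set $\{i:\varphi^{\mathcal{M}_i}\le 0\}$ belongs to $\mathfrak{D}$, so applying Lemma \ref{order} to the families $\{\varphi^{\mathcal{M}_i}\}$ and the constant family $\{0\}_{i\in I}$ gives $\lim_{\mathfrak{D}}\varphi^{\mathcal{M}_i}\le 0$, hence $\varphi^{\mathcal{M}}=0$. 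Consequently $\mathcal{M}\models\varphi$ for every $\varphi\in\Sigma$, i.e., $\mathcal{M}\models\Sigma$.

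The substantive work has already been absorbed into Łoś's theorem, whose proof rests on continuity of the connectives with respect to $d_{max}$ together with compactness of $V$. The only genuinely delicate point in the present argument is the passage from ``$\varphi^{\mathcal{M}_i}=0$ on a $\mathfrak{D}$-large set'' to ``$\lim_{\mathfrak{D}}\varphi^{\mathcal{M}_i}=0$'', and this is exactly what Lemma \ref{order} delivers when one of the families is the constant $0$; no further t-conorm manipulation is needed. Care must also be taken that the elements of $\Sigma$ are sentences (so that the values $\varphi^{\mathcal{M}_i}$ are elements of $V$ rather than functions on $M_i^n$), but this is automatic under our standing conventions.
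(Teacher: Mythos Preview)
Your argument is correct and follows essentially the same route as the paper: index by finite subsets of the theory, use the finite intersection property of the sets $A_\varphi$ to obtain an ultrafilter, and then apply the {\L}o\'s theorem to the ultraproduct. The only difference is cosmetic---you spell out, via Lemma~\ref{order} applied to the constant family $\{0\}$, why $\lim_{\mathfrak{D}}\varphi^{\mathcal{M}_i}=0$, whereas the paper leaves this implicit in the line ``By {\L}o\'s theorem, $\mathcal{M}\models T$.''
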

\begin{proof}
Assume that $T$ is a finitely satisfiable theory. Let $I$ be the set of all finite subsets of $T$.
For each $\varphi\in T$, let $\overline{\varphi}=\{\Sigma: \varphi\in\Sigma~\text{and}~\Sigma\in I\}$.
Obviously $\mathfrak{T}=\{\overline{\varphi}: \varphi\in T\}$ has the finite intersection property. So,
there exists an ultrafilter $\mathfrak{D}$ on $I$ containing $\mathfrak{T}$.

Let $T_i\in I$. As $T$ is finitely satisfiable, there exists a structure $\mathcal{M}_i\models T_i$.
Suppose that $\mathcal{M}$ be the $\mathfrak{D}$-ultraproduct of $\{\mathcal{M}_i\}_{i\in I}$.
By {\L}o$\acute{s}$ theorem, $\mathcal{M}\models T$.
\end{proof}

%
\bibliographystyle{mlq}
\bibliography{bibilio}
%
\end{document}